\documentclass[11pt,reqno]{amsart}
\usepackage{amsmath}
\usepackage{amsfonts}
\usepackage{amssymb, amsthm, epsfig}
\usepackage{mathrsfs}
\usepackage{hyperref}
 \usepackage{yhmath}
\usepackage{cite}
\usepackage[utf8]{inputenc}
\usepackage[T1]{fontenc}
\usepackage{geometry}
\usepackage{graphicx}
\usepackage{verbatim}
\usepackage{color}

\theoremstyle{plain}
\newtheorem{thm}{Theorem}[section]

\newtheorem{lem}[thm]{Lemma}
\newtheorem{prop}[thm]{Proposition}

\theoremstyle{definition}

\theoremstyle{remark}
\newtheorem{rem}{Remark}[section]

\numberwithin{equation}{section}%{section}

\allowdisplaybreaks

\DeclareSymbolFont{lettersA}{U}{pxmia}{m}{it}
\DeclareMathSymbol{\piup}{\mathord}{lettersA}{"19}

\makeatletter

\newcommand{\Rmnum}[1]{\expandafter\@slowromancap\romannumeral#1@}
\makeatother
\begin{document}

\title[]{Stability of oblique sonic shocks in steady supersonic potential flow past a wedge}%
%\thanks{This work was partially supported by National Natural Science Foundation of China (12071278) and Natural Science Foundation of Shanghai (23ZR1422100).
%}%
\author[]{Geng Lai}

\dedicatory{Department of Mathematics, Shanghai University,
            Shanghai, 200444, P. R. China\\%}%
\baselineskip 18pt%
\tt E-mail: laigeng@shu.edu.cn}
%\subjclass{}%
\keywords{}%

%\date{}%

%\commby{}%
% ----------------------------------------------------------------
\begin{abstract}
When a uniform steady supersonic oncoming flow impinges on a straight wedge, if the wedge angle is smaller than the detachment angle, two types of steady oblique shocks satisfying the entropy condition will form in the flow field: weak shocks with supersonic or subsonic downstream flow, and strong shocks with subsonic downstream flow.
There have been many results on the stability of oblique shocks with subsonic or supersonic downstream flow. However, much less is known about the stability of oblique shocks with sonic downstream flow, since the flow behind the shock wave is very sensitive to disturbances.
This paper investigates the stability of oblique sonic shocks under the assumption of a straight wedge with non-uniform incoming flow.
 We reduce the problem to a degenerate hyperbolic free boundary problem.
 A local Lipschitz-continuous solution with sonic-supersonic downstream flow to the free boundary problem is constructed, using the classical Ascoli-Arzel\`{a} theorem and a diagonal argument.
The main difficulty of this free boundary problem lies in the hyperbolic degeneracy on the sonic line.
By adopting a weighted characteristic decomposition method, we establish a delicate estimate of the downstream flow near the sonic line.

\vskip 4pt
\noindent%
{Keywords. Oblique shock, sonic-supersonic flow, degenerate hyperbolic system, characteristic decomposition.}
\
\vskip 4pt
\noindent%
{2020 AMS subject classification.} Primary: 35L65; Secondary: 35L60, 35L67.
\end{abstract}

\maketitle

%\tableofcontents

\section{Introduction}
Two-dimensional (2D) steady supersonic flows past compressive and rarefactive ramps
are fundamental in gas dynamics, as they provide global or local structures in various flow fields.
As shown in Figure \ref{Figure1},
there is an infinitely long wedge consisting of a horizontal wall that is straight up to a sharp corner $\mathrm{O}$, followed by a straight ramp. A supersonic flow with a constant state arrives along the horizontal wall and turns at $\mathrm{O}$ into a new direction.
If the wedge angle is smaller than the detachment angle, two types of steady oblique shocks satisfying the entropy condition will form in the flow field: weak shocks with supersonic or subsonic downstream flow, and strong shocks with subsonic downstream flow.

%=============
\begin{figure}[htbp]
\begin{center}
\includegraphics[scale=0.45]{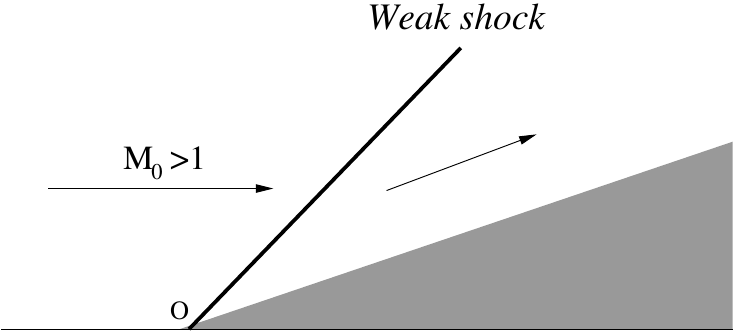}\qquad\qquad\qquad\qquad\includegraphics[scale=0.45]{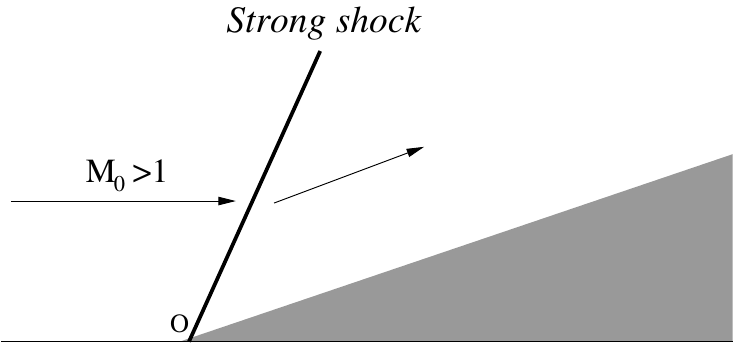}
\caption{\footnotesize 2D steady supersonic flow past a straight wedge.}
\label{Figure1}
\end{center}
\end{figure}
%==========

There have been many results on the stability of oblique shocks with subsonic or supersonic downstream flow.
For supersonic flows past a curved compressive ramp, the local existence of the corresponding curved shock with supersonic downstream flow (referred to as a supersonic shock) has been established in \cite{Gu,Li-Yu,Sch}. Yin \cite{Yin} applied a reflected characteristic method to obtain the global existence of a curved supersonic shock, provided that the curved ramp is a small perturbation of a straight one.
If the curved ramp is not a small perturbation of a straight one, the global existence of a curved supersonic shock has been established in \cite{HD,HQ}, provided that the Mach number of the incoming flow is sufficiently large.
The local existence of a curved conical supersonic shock for supersonic flow past a curved cone has been established in
\cite{CLD1,CLD2}.
For the global existence of curved conical supersonic shocks, we refer the reader to \cite{CXY,CY1,CY2,LWY1,HZ}. %,.
Chen-Fang \cite{CF} and Kim \cite{Eun} established the stability of 2D steady oblique transonic shocks with subsonic downstream flow. Xu and Ying \cite{XY1,XY2} established the global existence of curved conical transonic shocks.
Chen {\it et al.} \cite{CCX,CFf} also obtained the global existence of curved surface transonic shocks for supersonic flow past a three-dimensional compressive ramp.
Bae and Xiang \cite{BX} proved the existence of a detached shock solution for supersonic flow past a  blunt body.
We also refer to \cite{CKXZ,CKZ,CZZ,LL,WZ,Zhang} for the global existence of supersonic flows past Lipschitz-perturbed wedges and cones in the framework of BV functions.

If the downstream flow of an oblique shock is sonic, then it is very sensitive to disturbances. The flow downstream of the perturbed shock may be supersonic or subsonic, depending on how the shock is perturbed.
In one such case, the flow downstream of the shock transitions from sonic to subsonic, and the perturbed shock and its downstream flow can be determined by solving a degenerate elliptic boundary value problem. This case also frequently appears in 2D Riemann problems and 2D pseudo-steady shock reflection problems; see \cite{BCF1,BCF2,Canic1,Canic2,Canic3,CDX,ChenF1,ChenF2,Elling1,Elling2,Zheng1,Zheng2}. In another case, the flow downstream of the shock transitions from sonic to supersonic, and the problem reduces to solving a degenerate hyperbolic boundary value problem for the perturbed shock and its downstream flow.
Recently, there have been many results on the existence of 2D (pseudo-)steady sonic-supersonic flows; see \cite{HYB1,HYB2,HYB3,HYB4,LG1,LG2,Li4,WX1,WX2,Wen1,ZTY1}. These flow structures are mainly obtained by solving Cauchy or  initial–boundary value problems, and there are few results on free-boundary problems for shocks.
In this paper, we aim to construct a 2D steady sonic-supersonic flow arising from a free-boundary problem for a shock in potential flow.

%=============
\begin{figure}[htbp]
\begin{center}
\includegraphics[scale=0.42]{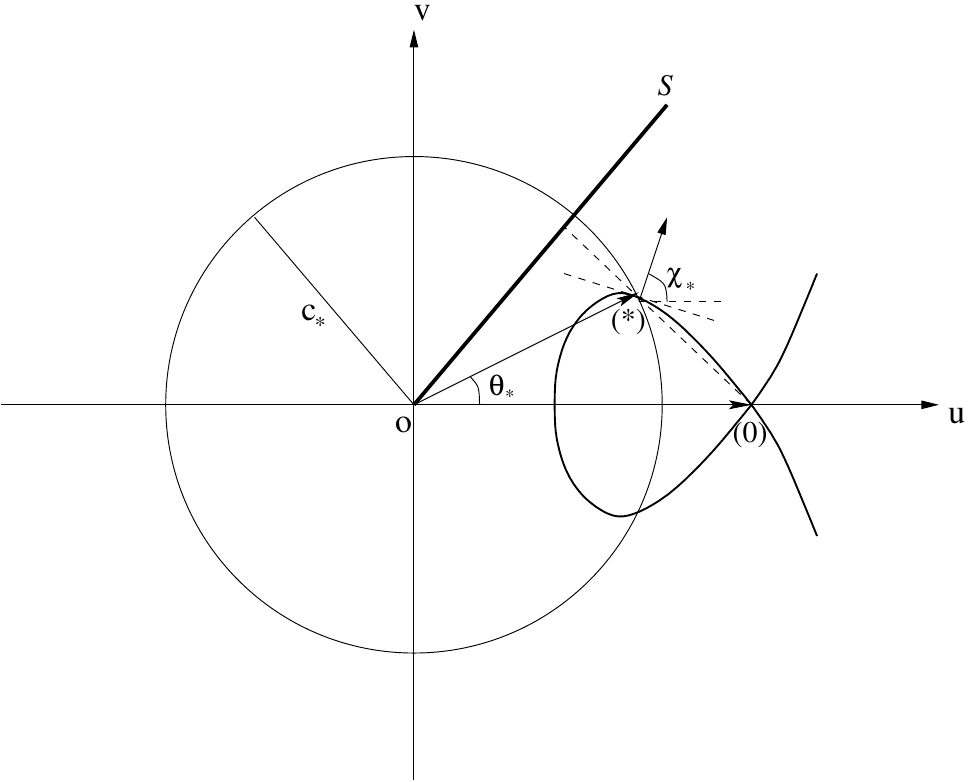}\quad\quad\includegraphics[scale=0.55]{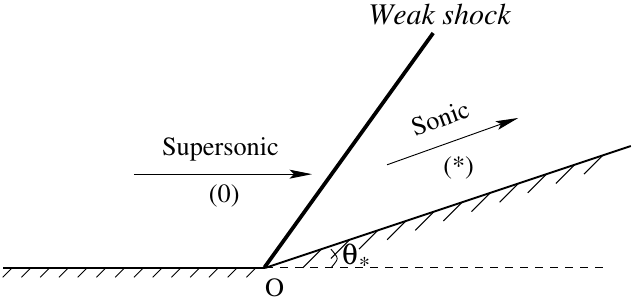}\qquad
\caption{\footnotesize Shock polar and an oblique sonic shock.}
\label{Figure2}
\end{center}
\end{figure}
%==========

The 2D steady potential flows of polytropic gases can be governed by
\begin{equation}\label{42501}
\left\{
  \begin{array}{ll}
 (\rho u)_x+(\rho v)_y=0, \\[4pt]
   u_y-v_x=0,
  \end{array}
\right.
\end{equation}
supplemented by Bernoulli's law
\begin{equation}\label{5802}
\frac{q^2}{2}+\frac{c^2}{\gamma-1}=\frac{\hat{q}^2}{2},
\end{equation}
where $(u, v)$ is the velocity, $\rho$ is the density, $c=\sqrt{\gamma \rho^{\gamma-1}}$ is the speed of sound, $\gamma$ is an adiabatic constant between $1$ and $3$,  $\hat{q}$ is a positive constant called the limiting speed, and $q=\sqrt{u^2+v^2}$.
From (\ref{5802}) we have
$$
c=\sqrt{\frac{\hat{q}^2-q^2}{\kappa}}\quad \mbox{and} \quad \rho=\left(\frac{\hat{q}^2-q^2}{\gamma\kappa}\right)^{\frac{1}{\gamma-1}},
$$
where $\kappa=\frac{2}{\gamma-1}$.

As shown in Figure \ref{Figure2} (right), a supersonic flow arrives with a constant velocity $(u_0, 0)$ and a constant density $\rho_0$  along
the horizontal wall and turns at $\mathrm{O}$ into a new direction through an oblique weak shock.
Assume that the inclination angle of the oblique shock is $\phi$. Then by the Rankine-Hugoniot conditions we know that the downstream state $(u, v)$ of the oblique shock satisfies
\begin{equation}\label{52401aa}
\left\{
  \begin{array}{ll}
    v\sin\phi+(u -u_0)\cos\phi=0, \\[4pt]
    (\rho  u -\rho_0u_0)\sin\phi-  \rho  v \cos\phi=0.
  \end{array}
\right.
\end{equation}
%where we still use $(u, v, \rho)$ to denote the flow on the back side of the shock.
Eliminating $\phi$ from (\ref{52401aa}), we get
\begin{equation}\label{82301}
 (\rho  u -\rho_0u_0)(u -u_0)+  \rho  v^2=0.
\end{equation}
 The curve in the $(u, v)$ plane given by (\ref{82301}) is the well-known shock polar; see Figure \ref{Figure2} (left). The branch $u^2+v^2<u_0^2$
 represents states behind shock fronts when the given state $(u_0, 0)$ refers to the front side state of the shock.
The shock polar intersects the circle $u^2+v^2=c_{*}^2$ at two points $(u_*, \pm v_*)$, where $c_*=\mu\hat{q}$ is the critical speed, $\mu=\sqrt{\frac{\gamma-1}{\gamma+1}}$, and $v_*>0$.
Let $\theta_*=\arctan(v_*/u_*)$. Then, if the wedge angle $\theta_w=\theta_*$, the downstream flow of the oblique weak shock is sonic, and the shock is usually called a sonic shock; if $0<\theta_w<\theta_*$ ($\theta_*<\theta_w<\theta_{ext}$, resp.), the downstream flow of the oblique weak shock is supersonic (subsonic, resp.), and the shock is sometimes called a supersonic (transonic, resp.) shock.

%In this paper, we study the stability of the oblique sonic shock.

%=============
\begin{figure}[htbp]
\begin{center}
\includegraphics[scale=0.45]{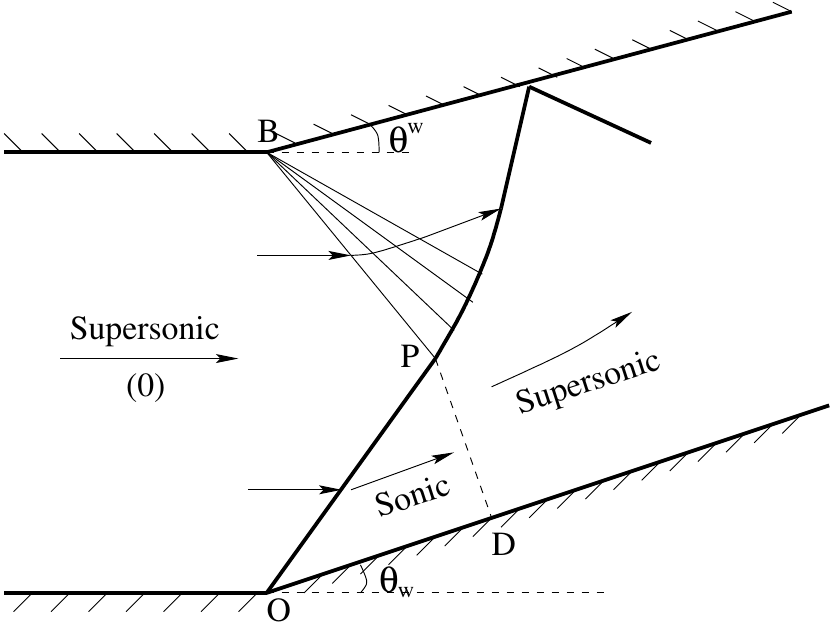}\qquad
\caption{\footnotesize Interaction of an oblique sonic shock with a centered expansion wave.}
\label{Figure3}
\end{center}
\end{figure}
%==========

An interesting question is what happens to the sonic shock when it is perturbed during its propagation.
To this end, we consider the following oblique-shock/expansion-fan interaction problem.
Consider Figure \ref{Figure3}, in which a supersonic flow with a constant velocity $(u_0, 0)$ and a constant density $\rho_0$ enters the duct from the left and encounters two consecutive wall deflections: a downward compressive corner at point $\mathrm{O}(0,0)$ on the lower wall, and an upward expansive corner at point $\mathrm{B}(0,1)$ on the upper wall.
We use $\theta_w$ and $\theta^w$ to denote the ramp angles of the lower and upper walls, respectively.
For the flow to negotiate this sudden turn, a centered expansion wave and an oblique shock wave are generated at the expansive and compressive corners, respectively.
The oblique shock wave intersects the leading characteristic of the expansion fan at point $\mathrm{P}(x_{_\mathrm{P}}, y_{_\mathrm{P}})$, initiating the interaction between these two waves.
The interaction between an oblique shock and an expansion fan is one of the fundamental wave interaction problems in steady supersonic flow, and it arises in a wide range of physical and engineering settings.
The interaction of an oblique supersonic shock with an expansion fan was studied by Li and Ben-dor \cite{LB}.
We assume that the ramp angle of the lower wall satisfies $\theta_w=\theta_*$. Then the oblique shock emanating from $\mathrm{O}$
is a sonic shock.
This paper studies the interaction of the oblique sonic shock and the centered expansion fan.

For smooth flow, system (\ref{42501}) can be written in the following matrix form
\begin{equation}
\left(
 \begin{array}{cc}
c^{2}-u^{2} & -uv\\
  0 & -1 \\
  \end{array}
  \right)\left(
           \begin{array}{c}
             u \\
             v \\
           \end{array}
         \right)_{x}+\left(
                         \begin{array}{cc}
                          -uv &  c^{2}-v^{2}\\
                           1 & 0 \\
                         \end{array}
                       \right)\left(
                                \begin{array}{c}
                                  u \\
                                  v \\
                                \end{array}
                              \right)_{y}~=~\left(
                                               \begin{array}{c}
                                                 0 \\
                                                 0 \\
                                               \end{array}
                                             \right).
                                             \label{matrix}
\end{equation}
%where $c=\sqrt{-\tau^2p'(\tau)}$ is the speed of sound.
The eigenvalues of (\ref{42501}) are determined
by
\begin{equation}
(v-\lambda u)^{2}-c^{2}(1+\lambda^{2})=0,\label{characteristice}
\end{equation}
which yields
\begin{equation}
\lambda=\lambda_{\pm}=\frac{uv\pm
c\sqrt{u^{2}+v^{2}-c^{2}}}{u^{2}-c^{2}}.
\end{equation}
So, if and only if $q>c$, system (\ref{42501}) is hyperbolic and has two families of wave characteristics $C_{\pm}$
defined as the integral curves of
$\frac{{\rm d}y}{{\rm d}x}=\lambda_{\pm}$.

%=============
%\begin{figure}[htbp]
%\begin{center}
%\includegraphics[scale=0.5]{characteristicd.pdf}
%\caption{ \footnotesize Characteristic angles and characteristic direction.}
%\label{Characteristicd}
%\end{center}
%\end{figure}
%==========

%We will use the method of characteristics,
%so we need the concept of the direction of the wave characteristics.
The directions of the wave
characteristics is defined as the tangent direction that forms an
acute angle $A$ with the direction of the flow velocity
$(u, v)$. The $C_+$
characteristic direction forms with the direction of the flow velocity
an angle $A$ from $(u, v)$ to $C_{+}$ in
the counterclockwise direction, and the $C_-$ characteristic direction forms with the
 direction of the flow velocity the angle $A$ from $(u, v)$ to $C_{-}$
in the clockwise direction.
The $C_{+}$ ($C_{-}$) characteristic angle is defined as the counterclockwise angle from the positive $x$-axis to the $C_{+}$ ($C_{-}$) characteristic direction.
We denote by $\alpha$ and
 $\beta$ the $C_{+}$ and $C_{-}$ characteristic angles,  respectively,
 where $0<\alpha-\beta<\pi$. Let $\sigma$ be the counterclockwise angle from the positive $x$-axis to the direction of the flow velocity.
 For the potential flow equations (\ref{42501}), we have
 \begin{equation}
\alpha=\sigma+A,\quad \beta=\sigma-A,\quad\sigma=\frac{\alpha+\beta}{2},\quad A=\frac{\alpha-\beta}{2}. \label{tau}
\end{equation}
Therefore, the relations between $(u,v,c)$ and $(\sigma, A, c)$ are
\begin{equation}\label{U}
u=\frac{c\cos\sigma}{\sin A}\quad \mbox{and}\quad v=\frac{c\sin\sigma}{\sin A}.
\end{equation}

Multiplying (\ref{matrix}) on the left by $(1,\mp c\sqrt{u^{2}+v^{2}-c^{2}})$, we get
\begin{equation}
\left\{
  \begin{array}{ll}
  \displaystyle \bar{\partial}_{+}u+\lambda_{-}\bar{\partial}_{+}v
 =0,  \\[4pt]
    \displaystyle  \bar{\partial}_{-}u+\lambda_{+}\bar{\partial}_{-}v=0,
  \end{array}
\right.\label{forma}
\end{equation}
where
\begin{equation}\label{pm}
\bar{\partial}_{+}:=\cos\alpha\partial_x+\sin\alpha\partial_y \quad \mbox{and}\quad \bar{\partial}_{-}:=\cos\beta\partial_x+\sin\beta\partial_y.
\end{equation}
\begin{lem}
Assume $(\bar{q}(\eta), \bar{\tau}(\eta), \bar{\sigma}(\eta))$ satisfies the equations
\begin{equation}\label{102302a}
\left\{
  \begin{array}{ll}
    \bar{q}'(\eta)\cos(\bar{A}(\eta))-\bar{q}(\eta)\bar{\sigma}'(\eta)\sin(\bar{A}(\eta))=0, \\[4pt]
    \bar{q}(\eta)\bar{q}'(\eta)+\bar{\tau}(\eta)p'(\bar{\tau}(\eta))\bar{\tau}'(\eta)=0,  \\[4pt]
    \bar{\sigma}(\eta)-\bar{A}(\eta)=\eta
  \end{array}
\right.
\end{equation}
 for $\eta\in (\eta', \eta'')$,
where $\bar{q}(\eta)>\bar{c}(\eta)>0$,
\begin{equation}\label{102303a}
\bar{A}(\eta)=\arcsin\Big(\frac{\bar{c}(\eta)}{\bar{q}(\eta)}\Big)\quad \mbox{and}\quad \bar{c}(\eta)=\bar{\tau}(\eta)\sqrt{-p'(\bar{\tau}(\eta))}.
\end{equation}
Let
\begin{equation}\label{102306a}
\bar{u}(\eta)=\bar{q}(\eta)\cos(\bar{\sigma}(\eta))\quad \mbox{and}\quad \bar{v}(\eta)=\bar{q}(\eta)\sin(\bar{\sigma}(\eta)).
\end{equation}
Then the function
\begin{equation}
(u, v)=(\bar{u}, \bar{v})(\eta), \quad   \eta'<\eta<\eta''
\end{equation}
is a centered simple wave solution with straight $C_{-}$ characteristic lines of  (\ref{42501}) on the fan-shaped domain
\begin{equation}\label{102502}
\Delta=\big\{(x, y)~\big|~(x, y)=(r\cos\eta, r\sin\eta+1),~ r>0,
~\eta\in(\eta', \eta'')
\big\}.
\end{equation}
\end{lem}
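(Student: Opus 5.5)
Since $(u,v)$ depends only on the polar angle $\eta$ about $\mathrm{B}(0,1)$, I will substitute $(u,v)=(\bar u,\bar v)(\eta)$ into (\ref{42501}) and check both equations directly.

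\textbf{Step 1: derivatives of $\eta$.} For $(x,y)=(r\cos\eta, r\sin\eta+1)$ we have
\[
\eta_x=-\frac{\sin\eta}{r},\qquad \eta_y=\frac{\cos\eta}{r}.
\]
So any $f(\eta)$ satisfies $f_x=-f'\sin\eta/r$ and $f_y=f'\cos\eta/r$. In particular $\bar{\partial}_-f=f'\sin(\eta-\beta)/r$. Since $\beta=\bar\sigma-\bar A=\eta$ by the third equation of (\ref{102302a}), this gives $\bar\partial_-\bar u=\bar\partial_-\bar v=0$. Hence the second equation of (\ref{forma}) holds trivially. It also shows that the $C_-$ direction at each point is the ray direction $(\cos\eta,\sin\eta)$, so each ray from $\mathrm B$ is a $C_-$ characteristic along which $(u,v)$ is constant. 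These are the straight $C_-$ lines of the statement. The hypothesis $\bar q>\bar c$ guarantees hyperbolicity, so $\bar A$, $\alpha$ and $\beta$ are well defined.

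\textbf{Step 2: reduce (\ref{42501}) to ODEs.} The irrotationality equation $u_y-v_x=0$ becomes $(\bar u'\cos\eta+\bar v'\sin\eta)/r=0$. Using (\ref{102306a}),
\[
\bar u'\cos\eta+\bar v'\sin\eta=\bar q'\cos(\bar\sigma-\eta)-\bar q\bar\sigma'\sin(\bar\sigma-\eta).
\]
Since $\bar\sigma-\eta=\bar A$, this is exactly the first equation of (\ref{102302a}).

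The continuity equation becomes $-(\rho u)'\sin\eta+(\rho v)'\cos\eta=0$. Writing $\rho=1/\bar\tau$ (the lemma's $p(\tau)$ notation), $\rho\bar v$ and $\rho\bar u$ combine to give
\[
-(\rho \bar u)'\sin\eta+(\rho\bar v)'\cos\eta=\rho'\bar q\sin\bar A+\rho\big(\bar q'\sin\bar A+\bar q\bar\sigma'\cos\bar A\big).
\]

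\textbf{Step 3: use the remaining relations.} From the first equation of (\ref{102302a}), $\bar\sigma'=\bar q'\cos\bar A/(\bar q\sin\bar A)$. Substituting, the bracket becomes $\bar q'/\sin\bar A$. The requirement then reduces to
\[
\rho'\bar q\sin^2\bar A+\rho\bar q'=0.
\]
Now $\sin\bar A=\bar c/\bar q$, and the Bernoulli relation, the second equation of (\ref{102302a}), gives $\bar q\bar q'=-\bar\tau p'(\bar\tau)\bar\tau'$. With $\rho'=-\bar\tau'/\bar\tau^2$ and $\bar c^2=-\bar\tau^2p'(\bar\tau)$, the left side equals
\[
-\frac{\bar\tau'}{\bar\tau^2}\cdot\frac{-\bar\tau^2p'\,}{\bar q}-\frac{\bar\tau p'\bar\tau'}{\bar\tau\bar q}=0,
\]
which closes the verification. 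This is consistent with (\ref{5802}) for the polytropic pressure law.

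\textbf{Main obstacle.} The substance of the proof is algebraic bookkeeping in Step 3. The key point is to consistently use $\bar\sigma-\eta=\bar A$ so that the trigonometric combinations collapse to $\sin\bar A$ and $\cos\bar A$. Care is also needed with the sign and orientation conventions for $\beta$, namely that $\beta=\sigma-A$ lies along the outward ray rather than its opposite.
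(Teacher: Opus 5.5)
Your verification is correct, but it takes a different route from the paper's.

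\textbf{The paper's route.} It first uses the second equation of (\ref{102302a}) to obtain Bernoulli's law, so that (\ref{42501}) can be replaced by the characteristic form (\ref{forma}). It then notes that the third equation makes each ray tangent to the sonic circle, i.e.\ $\beta=\eta$. Hence the $C_-$ equation $\bar\partial_-u+\lambda_+\bar\partial_-v=0$ holds trivially. The $C_+$ equation reduces to $(\bar u'\cos\eta+\bar v'\sin\eta)\,\bar\partial_+\eta/\cos\eta=0$. This is exactly the combination you get from $u_y-v_x$, namely the first equation of (\ref{102302a}).

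\textbf{Your route.} You check the divergence-form system directly. This costs you the extra continuity computation: after eliminating $\bar\sigma'$, you find that $\rho'\bar q\sin^2\bar A+\rho\bar q'=0$ is the differential Bernoulli law once $\sin\bar A=\bar c/\bar q$ and $\bar c^2=-\bar\tau^2p'(\bar\tau)$ are inserted.

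\textbf{What each buys.} The paper's route is shorter. The characteristic form already has Bernoulli's law and the sound speed built in, and the characteristic nature of the rays disposes of one equation for free. Your route is self-contained and does not rely on the equivalence of (\ref{42501}) and (\ref{forma}). It also makes the rank-one degeneracy visible: continuity follows from irrotationality and Bernoulli only because $\sin\bar A=\bar c/\bar q$, i.e.\ only because the rays are characteristic.

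One harmless slip: for $f=f(\eta)$ one has $\bar\partial_-f=f'\sin(\beta-\eta)/r$, not $f'\sin(\eta-\beta)/r$. This vanishes either way when $\beta=\eta$.
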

\begin{proof}
First, by the second equation (\ref{102302a}) we know that Bernoulli's law (\ref{5802}) holds.
From the third equation of (\ref{102302a}) we have
\begin{equation}\label{82104}
\big(\bar{u}(\eta), \bar{v}(\eta)\big)\cdot(-\sin\eta, \cos\eta)=\bar{c}(\eta).
\end{equation}
So, for any fixed $\eta\in (\eta', \eta'')$, the ray $x=r\cos\eta$, $y=r\sin\eta+1$ ($r>0$)
is tangent to the sonic circle $(x-\bar{u}(\eta))^2+(y-\bar{v}(\eta))^2=\bar{c}^2(\eta)$.
Therefore, by the result of Section 2 we see that this ray
 is
a straight $C_{+}$ characteristic line. Hence, we have $$\beta=\eta, \quad \bar{\partial}_{-}u=0,\quad \bar{\partial}_{-}v=0,\quad\mbox{and} \quad\bar{\partial}_{-}\tau=0\quad \mbox{in}~\Delta.$$ Consequently,
\begin{equation}\label{82101}
\bar{\partial}_{-}u+\lambda_{+}\bar{\partial}_{-}v=0\quad\mbox{in}~ \Delta.
\end{equation}

By the first equation of (\ref{102302a}) and $\beta=\eta$, we have
\begin{equation}\label{82102}
\begin{aligned}
\bar{\partial}_{+}u+\lambda_{-}\bar{\partial}_{+}v
&=\big( \bar{u}'(\eta)+\tan\eta  \bar{v}'(\eta)\big)\bar{\partial}_{-}\eta\\&=\big(\bar{q}'(\eta)\cos(\bar{A}(\eta))
-\bar{q}(\eta)\bar{\sigma}'(\eta)\sin(\bar{A}(\eta))\big)\frac{\bar{\partial}_{-}\eta}{\cos\eta}=0 \quad\mbox{in}~ \Delta.
\end{aligned}
\end{equation}
This completes the proof of this lemma.
\end{proof}

The system (\ref{102302a}) can by (\ref{102303a}) be written as
\begin{equation}\label{102304a}
\left\{
  \begin{array}{ll}
    \displaystyle\bar{q}'=-\frac{2\bar{c}p'(\bar{\tau})\cos \bar{A}}{\bar{\tau}p''(\bar{\tau})},\\[10pt]
     \displaystyle\bar{\tau}'=\frac{2\bar{q}\bar{c}\cos \bar{A}}{\bar{\tau}^2p''(\bar{\tau})},\\[10pt]
 \displaystyle\bar{\sigma}'=-\frac{2 p'(\bar{\tau})\cos^2 \bar{A}}{\bar{\tau}p''(\bar{\tau})}.
  \end{array}
\right.
\end{equation}
We consider (\ref{102304a}) with the initial data
\begin{equation}\label{102304b}
(\bar{q}, \bar{\tau}, \bar{\sigma})(\eta_0)=(u_0, \tau_0, 0),
\end{equation}
where $\eta_0=-A_0$, $\tau_0=1/\rho_0$, $A_0=\arcsin(c_0/u_0)$, and $c_0=\sqrt{\gamma \rho_0^{\gamma-1}}$.
Then there exists a $\eta_1>\eta_0$ such that the problem (\ref{102304a}), (\ref{102304b}) admits a solution on $[\eta_0, \eta_1]$. Moreover, the solution satisfies $\bar{\sigma}(\eta_1)=\theta^{w}$.
Then the centered expansion fan can be determined by (\ref{102306a}) with $\eta=\arctan(\frac{y-1}{x})$.

%From the second equation of (\ref{102304a}) we also have
%\begin{equation}
%\tilde{c}'=-\frac{(2p'(\tilde{\tau})+\tilde{\tau}p''(\tilde{\tau}))\tilde{q}\cos \tilde{A}}{\tilde{\tau}p''(\tilde{\tau})}.
%\end{equation}

From the point $\mathrm{P}$, we draw a straight ray with the inclination angle $\theta_*-\frac{\pi}{2}$. This ray intersects the lower wall at a point $\mathrm{D}$. %Then the flow in the triangle domain $\mathrm{OPD}$ is the constant state $(u_*, v_*)$.
To study the interaction of the oblique sonic shock and the centered expansion wave, we consider (\ref{42501}) with the following boundary conditions:
\begin{equation}\label{b1}
(u, v)=(u_*, v_*)\quad \mbox{on}\quad \overline{\mathrm{PD}};
\end{equation}
\begin{equation}\label{b2}
v(x,y)=u(x,y)\tan\theta_{*}\quad \mbox{on}\quad y=x\tan\theta_*, \quad x>x_{_\mathrm{D}};
\end{equation}
\begin{equation}\label{b3}
(\rho  u -\rho_fu_f)(u -u_f)+  (\rho  v -\rho_fv_f)(v -v_f)=0\quad \mbox{on}\quad y=\psi(x), \quad x>x_{_\mathrm{P}};
\end{equation}
\begin{equation}\label{b4}
\left\{
  \begin{array}{ll}
   \displaystyle \frac{{\rm d}\psi(x)}{{\rm d}x}=-\frac{u(x, \psi(x))-u_f(x, \psi(x))}{v(x, \psi(x))-v_f(x, \psi(x))}, & \hbox{$x>x_{_\mathrm{P}}$,} \\
    \psi(x_{_\mathrm{P}})=y_{_\mathrm{P}},
  \end{array}
\right.
\end{equation}
where $(u_f, v_f)(x, y)=(\bar{u}, \bar{v})(\eta)$ and $(\bar{u}, \bar{v})(\eta)$ is the solution of the problem (\ref{102304a}), (\ref{102304b}).

The main result is stated as follows.
\begin{thm}\label{main}
There exists a small $\delta>0$ such that the free boundary problem (\ref{42501}), (\ref{b1})--(\ref{b4}) admits a Lipschitz-continuous solution on a  domain $\Sigma(\delta)$ bounded by $\overline{\mathrm{PD}}$, $y=\psi(x)$ ($x_{_\mathrm{P}}<x<x_{_\mathrm{P}}+\delta$), the lower wall $y=x\tan\theta_*$, and a forward $C_{-}$ characteristic line issued from the point $(x_{_\mathrm{P}}+\delta, \psi(x_{_\mathrm{P}}+\delta))$. Moreover, the solution satisfies $q>c_{*}$ in $\overline{\Sigma(\delta)}\setminus\overline{\mathrm{PD}}$.
\end{thm}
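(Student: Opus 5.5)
We will follow the route announced in the abstract: approximate the degenerate problem by uniformly hyperbolic ones, obtain uniform estimates, and pass to the limit with the Ascoli--Arzel\`{a} theorem and a diagonal argument.

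\emph{Reformulation.} First we rewrite (\ref{42501}) in terms of the characteristic angles $(\alpha,\beta)$, or equivalently $(\sigma,A)$ via (\ref{tau})--(\ref{U}), using (\ref{forma}). This gives a diagonal system of the form
\[
\bar{\partial}_{+}\beta = \cdots,\qquad \bar{\partial}_{-}\alpha=\cdots .
\]
On $\overline{\mathrm{PD}}$ the state is sonic, so $A=\pi/2$ there and $\alpha=\beta+\pi$. Hence the two characteristic families coincide, and the system degenerates on $\overline{\mathrm{PD}}$.

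\emph{Boundary conditions on the shock.} Along $y=\psi(x)$, the conditions (\ref{b3})--(\ref{b4}) together with the known upstream fan state $(u_f,v_f)(\eta)$ determine one Riemann-type variable as a function of the other and of the shock position. The shock slope is then an ODE coupled to the downstream trace. We shall check that the fan makes the upstream state rarefy as one moves along the shock, which weakens the shock. By the shock polar geometry (Figure \ref{Figure2}), the downstream state then moves off the sonic circle into the supersonic side, $q>c_*$. This is the mechanism behind the final claim of Theorem \ref{main}.

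\emph{Approximate problems.} Next we regularize. For small $\varepsilon>0$ we replace the sonic data on $\overline{\mathrm{PD}}$ by slightly supersonic data $q=c_*+\varepsilon$, adjusted so that compatibility holds at $\mathrm{P}$ and $\mathrm{D}$. The resulting problem is uniformly hyperbolic: a Goursat/free-boundary problem bounded by the data line, the wall with slip condition (\ref{b2}), and the shock. We solve it locally by the standard characteristic method (Li--Yu type iteration), combining
\begin{itemize}
\item reflection of characteristics at the wall, and
\item the shock ODE (\ref{b4}) on the free boundary.
\end{itemize}

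\emph{Uniform estimates — the main obstacle.} The key step is to obtain estimates independent of $\varepsilon$ on a common domain $\Sigma(\delta)$. We need three things:
\begin{itemize}
\item a uniform lower bound $q-c_*\ge 0$, with strict positivity away from $\overline{\mathrm{PD}}$;
\item uniform Lipschitz bounds on $(u,v)$ and on $\psi'$;
\item a uniform lower bound on the life span $\delta$.
\end{itemize}
Near the sonic line the characteristic derivatives $\bar{\partial}_{\pm}$ blow up relative to $\partial_x,\partial_y$, so we follow the weighted characteristic decomposition idea. We derive the characteristic decompositions for $\bar{\partial}_{\pm}A$ (or for $\bar{\partial}_{\pm}c$) and introduce weighted quantities such as
\[
W_\pm=\frac{\bar{\partial}_{\pm}c}{\cos A}\quad\text{(or }\sin(2A)\text{-weighted versions)}.
\]
These satisfy Riccati-type ODEs along characteristics whose coefficients remain bounded as $A\to\pi/2$. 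We then run an invariant-region/bootstrap argument showing that $W_\pm$ keep a fixed sign and stay bounded, using the sign information supplied by the expansion fan at the shock and by the slip condition at the wall.

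From the bounds on $W_\pm$ we recover:
\begin{itemize}
\item Lipschitz bounds for $(u,v)$ in $(x,y)$, by expressing $\partial_x,\partial_y$ through $\bar{\partial}_\pm$ and controlling the factor $1/\sin(\alpha-\beta)$ through the weight;
\item the monotone growth of $q$ away from $\overline{\mathrm{PD}}$.
\end{itemize}
We expect this step to be the hardest. What we will try first is to choose the weight so that the cross terms in the decomposition cancel exactly at $A=\pi/2$.

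\emph{Passage to the limit.} With the uniform estimates in hand, the approximate solutions and shock curves $\psi_\varepsilon$ are equicontinuous and bounded on $\Sigma(\delta)$. Ascoli--Arzel\`{a} gives convergence on compact subsets of an exhaustion of $\Sigma(\delta)$, and a diagonal subsequence yields a limit $(u,v,\psi)$. This limit is Lipschitz, satisfies (\ref{42501}) weakly and then a.e., and satisfies (\ref{b1})--(\ref{b4}) by uniform convergence. Finally, the strict inequality $q>c_*$ off $\overline{\mathrm{PD}}$ follows from the $\varepsilon$-independent lower bound on the growth of $q$ along characteristics leaving $\overline{\mathrm{PD}}$.
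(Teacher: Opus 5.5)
\emph{The step that fails is your regularization.} If you keep the wall $y=x\tan\theta_*$ and prescribe slightly supersonic data on $\overline{\mathrm{PD}}$, that data cannot have the rarefaction signs $\bar{\partial}_{\pm}c\le 0$. Your invariant-region argument, and the paper's, rests on those signs.

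To see this, write $r_{\pm}=\sigma\pm\omega(q)$ with $\omega'(q)=\sqrt{q^2-c^2}/(qc)$, and let $\bar{\partial}_{\tau}$ be the derivative along $\overline{\mathrm{PD}}$ from $\mathrm{P}$ to $\mathrm{D}$. When $\overline{\mathrm{PD}}$ is a space-like initial curve for the downstream region, $\bar{\partial}_{\tau}=t_{+}\bar{\partial}_{+}+t_{-}\bar{\partial}_{-}$ with $t_{+}<0<t_{-}$: the $C_-$ direction lies just counterclockwise of $\overline{\mathrm{PD}}$, and the reversed $C_+$ direction just clockwise. Using $\bar{\partial}_{+}r_{-}=\bar{\partial}_{-}r_{+}=0$ you get $\bar{\partial}_{\tau}r_{+}=2t_{+}\omega'\bar{\partial}_{+}q$ and $\bar{\partial}_{\tau}r_{-}=-2t_{-}\omega'\bar{\partial}_{-}q$. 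Hence $\bar{\partial}_{\pm}c\le 0$ (equivalently $\bar{\partial}_{\pm}q\ge 0$) on $\overline{\mathrm{PD}}$ forces $r_{+}$, $r_{-}$, and therefore $\sigma$, to be nonincreasing from $\mathrm{P}$ to $\mathrm{D}$. But (\ref{b2}) forces $\sigma(\mathrm{D})=\theta_*$, while a state on the shock polar of $(u_0,0)$ with $q>c_*$ has $\sigma(\mathrm{P})<\theta_*$. So the data must turn the flow toward the wall, i.e.\ it must be compressive somewhere: the streamlines leaving $\overline{\mathrm{PD}}$ converge.

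For your literal choice $q\equiv c_*+\varepsilon$ (with $\sigma$ monotone), both families are compressive. Since $\theta_*-\sigma(\mathrm{P})$ is of order $\varepsilon$ while $\cos^3A$ is of order $\varepsilon^{3/2}$, the weighted derivatives $\bar{\partial}_{\pm}c/\cos^2A$ are comparable to $\bar{\partial}_{\tau}\sigma/\cos^3A$, hence positive and of order $\varepsilon^{-1/2}$. The equations (\ref{wcd}) preserve these signs, and since $\mathcal{F}>0$ near $A=\frac{\pi}{2}$ they give $c\bar{\partial}_{+}W\ge\frac{\gamma+1}{2(\gamma-1)}W^2$ for $W=\bar{\partial}_{-}c/\cos^2A$. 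So the gradient blows up, and a shock forms, within distance $O(\varepsilon^{1/2})$ behind $\overline{\mathrm{PD}}$. The approximate solutions therefore do not live on any $\varepsilon$-independent domain, and the fixed-sign bootstrap cannot even start.

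The paper avoids this by regularizing the geometry rather than the data. With $\theta_w=\theta_*-\varepsilon$, the straight shock from $\mathrm{O}$ leaves a constant supersonic state $(u_1,v_1)$ with $\sigma_1=\theta_w$, compatible with both the polar and the wall. The data segment $\overline{\mathrm{P}_{\varepsilon}\mathrm{D}_{\varepsilon}}$ is a straight $C_-$ characteristic of that state. The adjacent solution is then a rarefaction simple wave: $\bar{\partial}_{-}c=0$, and $\bar{\partial}_{+}c<0$ by (\ref{52505}) with $\mathcal{L}>0$ and $\bar{\partial}_{s}r_{+}^{f}>0$. So $\bar{\partial}_{\pm}c\le0$ holds initially and can be propagated. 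Since the wall moves with $\varepsilon$, the domains $\Sigma_{\varepsilon}(\delta)$ vary, which is what the diagonal argument handles.

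Two further corrections to your estimate step:
\begin{enumerate}
\item The weight $1/\cos A$ does not give bounded coefficients: the quadratic coefficient in the equation for $\bar{\partial}_{+}c/\cos A$ is $\frac{\gamma+1}{2c(\gamma-1)\cos A}$. The weight $1/\cos^2A$ in (\ref{wcd}) does give bounded coefficients. There the cross term does not vanish ($\mathcal{F}=\frac{5(\gamma+1)}{2(\gamma-1)}$ at $A=\frac{\pi}{2}$), but under the sign condition it has the favorable sign.
\item The shock relation only bounds $\bar{\partial}_{+}c/\cos A$, so $\bar{\partial}_{+}c/\cos^2A$ is of size $1/\cos A$ on the shock near $\mathrm{P}_{\varepsilon}$. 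Uniformity comes from the sign, not from smallness. Equation (\ref{wcd}) gives $\bar{\partial}_{-}(\cos^2A/\bar{\partial}_{+}c)\le-\frac{\gamma+1}{2c(\gamma-1)}$. Integrating along the $C_-$ characteristics from the shock to the wall, whose length is comparable to $\ell_*$, gives the paper's bound $|\bar{\partial}_{+}c|/\cos^2A<\frac{4(\gamma-1)c_*}{(\gamma+1)\ell_*}$ on the wall, however large the quantity was on the shock. The wall identity $\bar{\partial}_{+}c=\bar{\partial}_{-}c$ then passes this bound to $\bar{\partial}_{-}c$ and closes the loop.
\end{enumerate}
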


The main difficulty for the existence of a local solution is that the system is degenerate on the sonic line $\overline{\mathrm{PD}}$. Therefore, the result by Li and Yu \cite{Li-Yu} on the typical free boundary  problem of quasilinear hyperbolic systems does not apply here. To overcome the difficulty caused by the degeneracy, we consider a regularized problem by assuming that the lower wall angle satisfies $\theta_w=\theta_*-\varepsilon$, where $\varepsilon$ is a small positive constant.
Using the characteristic decomposition method, we establish a uniform $C^{0,1}$ estimate with respect to $\varepsilon$ for the solution of the regularized problem.

\section{Characteristic equations and decompositions}
\subsection{Characteristic equations}

From (\ref{U}) we have
\begin{equation}
\bar{\partial}_{\pm}u=\frac{\cos\sigma}{\sin A}\bar{\partial}_{\pm}c+\frac{c\cos\alpha\bar{\partial}_{\pm}\beta
-c\cos\beta\bar{\partial}_{\pm}\alpha}{2\sin^{2}A},\label{1a}
\end{equation}
\begin{equation}
\bar{\partial}_{\pm}v=\frac{\sin\sigma}{\sin A}\bar{\partial}_{\pm}c
+\frac{c\sin\alpha\bar{\partial}_{\pm}\beta
-c\sin\beta\bar{\partial}_{\pm}\alpha}{2\sin^{2}A}.\label{2a}
\end{equation}
Inserting (\ref{1a}) and (\ref{2a}) into (\ref{forma}), we obtain
\begin{equation}\label{3a}
\bar{\partial}_{+}c=\frac{c}{\sin2A}
(\bar{\partial}_{+}\alpha-\cos2A\bar{\partial}_{+}\beta)
\quad\mbox{and}\quad
\bar{\partial}_{-}c=\frac{c}{\sin2A}
(\cos2A\bar{\partial}_{-}\alpha-\bar{\partial}_{-}\beta).
\end{equation}

Differentiating Bernoulli's law (\ref{5802}) and
using (\ref{1a}) and (\ref{2a}), we get
\begin{equation}
\left(\frac{1}{\sin^{2}A}+\kappa\right)\bar{\partial}_{\pm}c=
\frac{c\cos A}{2\sin^{3}A}(\bar{\partial}_{\pm}\alpha-\bar{\partial}_{\pm}\beta).\label{bB1a}
\end{equation}
%where $\kappa=\frac{2}{\gamma-1}$.

Inserting (\ref{bB1a}) into (\ref{3a}), we obtain
\begin{equation}\label{6a}
\bar{\partial}_{+}\alpha=\Gamma\cos^{2}A\bar{\partial}_{+}\beta\quad \mbox{and}\quad \bar{\partial}_{-}\beta=\Gamma\cos^{2}A\bar{\partial}_{-}\alpha,
\end{equation}
where
$\Gamma=\frac{3-\gamma}{\gamma+1}-\tan^2A$.

Combining (\ref{3a}) and (\ref{6a}), we have
\begin{equation}
c\bar{\partial}_{+}\beta=-(1+\kappa)\tan A\bar{\partial}_{+}c,\label{7a}
\end{equation}
\begin{equation}
c\bar{\partial}_{+}\alpha=-\left(\frac{1+\kappa}{2}\right)\Gamma\sin2 A\bar{\partial}_{+}c,\label{10a}
\end{equation}
\begin{equation}\label{82301a}
c\bar{\partial}_{-}\alpha=(1+\kappa)\tan A\bar{\partial}_{-}c,
\end{equation}
\begin{equation}
c\bar{\partial}_{-}\beta=\left(\frac{1+\kappa}{2}\right) \Gamma\sin2 A\bar{\partial}_{-}c.\label{8a}
\end{equation}

From (\ref{1a}), (\ref{2a}), and (\ref{7a})--(\ref{8a}) we have
\begin{equation}\label{11a}
\bar{\partial}_{+}u=\kappa\sin\beta\bar{\partial}_{+}c,
\quad\bar{\partial}_{-}u=-\kappa\sin\alpha\bar{\partial}_{-}c,
\end{equation}
\begin{equation}\label{72804a}
\bar{\partial}_{+}v=-\kappa\cos\beta\bar{\partial}_{+}c,
\quad\bar{\partial}_{-}v=\kappa\cos\alpha\bar{\partial}_{-}c.
\end{equation}

\begin{rem}
From (\ref{11a}) and (\ref{72804a}) we can see that the bounds of $|\nabla u|$ and $|\nabla v|$ can be controlled by the bound of $|\nabla c|$.
\end{rem}

From Bernoulli's law (\ref{5802}) we also have
\begin{equation}\label{pma}
\bar{\partial}_{\pm}c= \frac{c\cot A}{1+\kappa \sin^2 A}\bar{\partial}_{\pm}A.
\end{equation}

\subsection{Riemann invariants}
The Riemann invariants of (\ref{42501}) are defined as
\begin{equation}\label{Riemanni}
r_{\pm}(\sigma,q)=\sigma\pm\int^{q}\frac{\sqrt{q^{2}-c^{2}}}{qc}dq.
\end{equation}
%in which $c=\ddot{c}(q)$.
In view of the Riemann invariants, we have
\begin{equation}\label{52801}
\left\{
  \begin{array}{ll}
    \bar{\partial}_{+}r_{-}=0,  \\[2pt]
     \bar{\partial}_{-}r_{+}=0.
  \end{array}
\right.
\end{equation}

It follows from
\begin{equation}
\left(
  \begin{array}{cc}
    \displaystyle\frac{\partial r_{+}}{\partial \sigma} & \displaystyle\frac{\partial r_{+}}{\partial q}   \\[8pt]
    \displaystyle\frac{\partial r_{-}}{\partial \sigma}  & \displaystyle\frac{\partial r_{-}}{\partial q}
  \end{array}
\right)\left(
         \begin{array}{cc}
          \displaystyle \frac{\partial \sigma}{\partial r_{+}}  & \displaystyle\frac{\partial \sigma}{\partial r_{-}} \\[8pt]
           \displaystyle\frac{\partial q}{\partial r_{+}} & \displaystyle \frac{\partial q}{\partial r_{-}}
         \end{array}
       \right)=\left(
                 \begin{array}{cc}
                   1 & 0 \\[12pt]
                   0 & 1 \\
                 \end{array}
               \right)
\end{equation}
that
\begin{equation}
\frac{\partial\sigma}{\partial r_{\pm}}=\frac{1}{2}\quad \mbox{and} \quad
\frac{\partial q}{\partial
r_{\pm}}=\pm \frac{qc}{2\sqrt{q^{2}-c^{2}}}=\pm \frac{q\sin A}{2\cos A}.\label{1201}
\end{equation}

Thus, by (\ref{U}) we have
\begin{equation}\label{6603}
\frac{\partial u}{\partial r_{+}}=-\frac{q\sin\beta}{2\cos A}, \quad \frac{\partial u}{\partial r_{-}}=-\frac{q\sin\alpha}{2\cos A}, \quad \frac{\partial v}{\partial r_{+}}=\frac{q\cos\beta}{2\cos A}, \quad \mbox{and} \quad
\frac{\partial v}{\partial r_{-}}=\frac{q\cos\alpha}{2\cos A}.
\end{equation}
Combining these with Bernoulli's law (\ref{5802}) we also have
\begin{equation}\label{6606}
\frac{\partial \rho}{\partial r_{-}}=\frac{\rho }{\sin (2A)}\quad \mbox{and}\quad \frac{\partial \rho}{\partial r_{+}}=-\frac{\rho }{\sin (2A)}.
\end{equation}

\subsection{Characteristic decompositions}

\begin{prop}
(Commutator relation)
\begin{equation}\label{comm}
\begin{array}{rcl}
\bar{\partial}_{-} \bar{\partial}_{+}- \bar{\partial}_{+} \bar{\partial}_{-}=
\displaystyle\frac{1}{\sin2 A}\Big[\big(\cos2 A \bar{\partial}_{+}\beta- \bar{\partial}_{-}\alpha\big) \bar{\partial}_{-}-
\big( \bar{\partial}_{+}\beta-\cos2 A \bar{\partial}_{-}\alpha\big) \bar{\partial}_{+}\Big].
\end{array}
\end{equation}
\end{prop}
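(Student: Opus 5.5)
The plan is a direct computation of the Lie bracket of the two first-order operators $\bar{\partial}_{-}$ and $\bar{\partial}_{+}$, followed by re-expressing the result in the frame $\{\bar{\partial}_{+},\bar{\partial}_{-}\}$. No earlier result beyond the definitions (\ref{pm}) and the relation $\alpha-\beta=2A$ from (\ref{tau}) is needed. I will assume $\alpha,\beta\in C^{1}$ and apply both sides to an arbitrary $C^{2}$ function $f$.

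\textbf{Step 1 (bracket in Cartesian form).} Write $e_\alpha=(\cos\alpha,\sin\alpha)$, $e_\beta=(\cos\beta,\sin\beta)$ and $n_\alpha=(-\sin\alpha,\cos\alpha)$, $n_\beta=(-\sin\beta,\cos\beta)$. The second-order terms $\partial_{ij}f$ cancel by symmetry of mixed partials. Only the derivatives of the coefficients survive, so
$$
\bar{\partial}_{-}\bar{\partial}_{+}f-\bar{\partial}_{+}\bar{\partial}_{-}f
=\bar{\partial}_{-}\alpha\,(n_\alpha\cdot\nabla f)-\bar{\partial}_{+}\beta\,(n_\beta\cdot\nabla f).
$$

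\textbf{Step 2 (change of frame).} Since $0<\alpha-\beta<\pi$, the vectors $e_\alpha,e_\beta$ form a basis, and I decompose $n_\alpha$ and $n_\beta$ in it by Cramer's rule. Using the cross product $a\times b=a_1b_2-a_2b_1$, the needed values are:
\begin{itemize}
\item $e_\alpha\times e_\beta=\sin(\beta-\alpha)=-\sin2A$;
\item $n_\alpha\times e_\beta=-\cos2A$ and $e_\alpha\times n_\alpha=1$;
\item $n_\beta\times e_\beta=-1$ and $e_\alpha\times n_\beta=\cos2A$.
\end{itemize}
These give
$$
n_\alpha\cdot\nabla=\frac{\cos2A\,\bar{\partial}_{+}-\bar{\partial}_{-}}{\sin2A},\qquad
n_\beta\cdot\nabla=\frac{\bar{\partial}_{+}-\cos2A\,\bar{\partial}_{-}}{\sin2A}.
$$

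\textbf{Step 3 (collect terms).} Substituting Step 2 into Step 1 and grouping the $\bar{\partial}_{-}$ and $\bar{\partial}_{+}$ coefficients gives:
\begin{itemize}
\item coefficient of $\bar{\partial}_{-}$: $\big(\cos2A\,\bar{\partial}_{+}\beta-\bar{\partial}_{-}\alpha\big)/\sin2A$;
\item coefficient of $\bar{\partial}_{+}$: $-\big(\bar{\partial}_{+}\beta-\cos2A\,\bar{\partial}_{-}\alpha\big)/\sin2A$.
\end{itemize}
This is exactly (\ref{comm}).

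The only real pitfall is the signs in Step 2, in particular that $e_\alpha\times e_\beta$ equals $-\sin2A$ rather than $+\sin2A$. I would guard against an error there by a sanity check. Take $\alpha=\pi/2$, $\beta=0$. Then $n_\alpha=(-1,0)=-e_\beta$, and indeed $(\cos2A\,\bar{\partial}_{+}-\bar{\partial}_{-})/\sin2A=-\bar{\partial}_{-}$ since $\cos2A=0$ and $\sin2A=1$.
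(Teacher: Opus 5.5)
Your computation is correct. The Lie-bracket formula in Step 1, the Cramer's-rule decomposition of $n_\alpha$ and $n_\beta$ in the frame $\{e_\alpha,e_\beta\}$ (valid because $\sin 2A\neq 0$ when $0<\alpha-\beta<\pi$), and the regrouping in Step 3 all check out and give exactly the stated identity. The paper gives no proof of its own, only a citation to Li--Zhang--Zheng, and their derivation is this same direct frame computation, so your route is essentially the standard one.
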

\begin{proof}
This commutator relation was first derived by Li, Zhang, and Zheng \cite{Li-Zhang-Zheng}. We omit the details.
\end{proof}

\begin{prop}
For the variable $c$,
we have the characteristic decompositions
\begin{equation}\label{cd}
\left\{
  \begin{array}{ll}
 \displaystyle c\bar{\partial}_{-}\bar{\partial}_{+}c~=~
   \frac{(\gamma+1)\bar{\partial}_{+}c\bar{\partial}_{+}c}{2(\gamma-1)\cos^2A}+\frac{(\gamma+1)-2\sin^2 2A}{2(\gamma-1)\cos^2 A}
   \bar{\partial}_{-}c\bar{\partial}_{+}c,
\\[14pt]
   \displaystyle c\bar{\partial}_{+}\bar{\partial}_{-}c~=~
\frac{(\gamma+1)\bar{\partial}_{-}c \bar{\partial}_{-}c}{2(\gamma-1)\cos^2A}+\frac{(\gamma+1)-2\sin^2 2A}{2(\gamma-1)\cos^2 A}
   \bar{\partial}_{+}c \bar{\partial}_{-}c.
  \end{array}
\right.
\end{equation}
\end{prop}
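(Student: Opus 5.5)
By the symmetry $\alpha\leftrightarrow\beta$, $\bar\partial_+\leftrightarrow\bar\partial_-$ (with the sign changes visible in (\ref{7a})--(\ref{8a})), it suffices to derive the first identity in (\ref{cd}). The second follows by the same computation with the roles of the two families exchanged.

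The plan is to apply the commutator relation (\ref{comm}) to the function $c$, writing
\[
c\bar{\partial}_{-}\bar{\partial}_{+}c = c\bar{\partial}_{+}\bar{\partial}_{-}c + c\,[\bar{\partial}_{-},\bar{\partial}_{+}]c ,
\]
and to compute $\bar\partial_+\bar\partial_- c$ from a first-order relation that can be differentiated along $C_+$. Relation (\ref{82301a}) is the natural one: it gives $\bar\partial_-c = \frac{c}{(1+\kappa)\tan A}\,\bar\partial_-\alpha$. Alternatively, I would use the identity obtained by cross-differentiation of $\alpha$:
\[
\bar\partial_+\bar\partial_-\alpha-\bar\partial_-\bar\partial_+\alpha=[\bar\partial_+,\bar\partial_-]\alpha .
\]

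The cleanest route avoids second derivatives of $\alpha$. Since $\bar\partial_\pm = \cos(\cdot)\partial_x+\sin(\cdot)\partial_y$, the commutator (\ref{comm}) applied to $c$ involves only the first derivatives $\bar\partial_+\beta$, $\bar\partial_-\alpha$, $\bar\partial_\pm c$. Using (\ref{7a}) and (\ref{82301a}), we have
\[
\bar\partial_+\beta=-\frac{(1+\kappa)\tan A}{c}\,\bar\partial_+c,\qquad \bar\partial_-\alpha=\frac{(1+\kappa)\tan A}{c}\,\bar\partial_-c ,
\]
so every coefficient in (\ref{comm}) becomes a multiple of $\bar\partial_\pm c/c$. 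This yields an expression for $c(\bar\partial_-\bar\partial_+-\bar\partial_+\bar\partial_-)c$ that is quadratic in $\bar\partial_\pm c$.

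To get a second, independent equation, I would use that $\bar\partial_+ c$ and $\bar\partial_- c$ are derivatives of the same function along the frames $(\cos\alpha,\sin\alpha)$ and $(\cos\beta,\sin\beta)$. I would express $\nabla c$ through $\bar\partial_\pm c$. Then I would exploit irrotationality in the form (\ref{11a})--(\ref{72804a}): compatibility $u_y=v_x$, together with the continuity equation, applied to $\partial_x,\partial_y$ expressed via $\bar\partial_\pm$. Concretely, I would differentiate $\bar\partial_+ u=\kappa\sin\beta\,\bar\partial_+c$ along $C_-$ and $\bar\partial_- u=-\kappa\sin\alpha\,\bar\partial_-c$ along $C_+$. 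Subtracting and using (\ref{comm}) on $u$, the same for $v$, I would eliminate $\bar\partial_+\bar\partial_-c$ by the combination $\cos\alpha\cdot(u\text{-eq})+\sin\alpha\cdot(v\text{-eq})$. This combination kills the $\bar\partial_+\bar\partial_-c$ term because $-\sin\alpha\cos\alpha+\cos\alpha\sin\alpha=0$, and it leaves $\kappa\sin(\beta-\alpha)\,\bar\partial_-\bar\partial_+c=-\kappa\sin2A\,\bar\partial_-\bar\partial_+c$. The remaining terms, namely $\kappa\cos(\beta-\alpha)\,\bar\partial_-\beta\,\bar\partial_+c$, $\kappa\,\bar\partial_+\alpha\,\bar\partial_-c$ (from differentiating $\sin\alpha$), and the commutator contributions $\cos\alpha\,[\bar\partial_-,\bar\partial_+]u+\sin\alpha\,[\bar\partial_-,\bar\partial_+]v$, are all first order. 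I would rewrite them through (\ref{10a}), (\ref{8a}), (\ref{7a}), (\ref{82301a}) as quadratic forms in $\bar\partial_\pm c$ with coefficients depending on $A$ only.

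The main obstacle is the bookkeeping of the trigonometric coefficients. This means collecting terms in $(\bar\partial_+c)^2$ and $\bar\partial_+c\,\bar\partial_-c$ (the $(\bar\partial_-c)^2$ terms should cancel), and simplifying with $\Gamma=\frac{3-\gamma}{\gamma+1}-\tan^2A$ and $1+\kappa=\frac{\gamma+1}{\gamma-1}$ to reach the coefficients $\frac{\gamma+1}{2(\gamma-1)\cos^2A}$ and $\frac{(\gamma+1)-2\sin^22A}{2(\gamma-1)\cos^2A}$. I would check the final coefficients against the simple-wave case of the Lemma, where $\bar\partial_-c=0$. There, the identity must reduce to a Riccati equation along the straight $C_-$ lines, which can be verified directly from (\ref{102304a}).
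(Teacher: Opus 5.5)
Your route is correct. It differs from the paper's only in how the unwanted mixed derivative $\bar{\partial}_{+}\bar{\partial}_{-}c$ is eliminated. The paper applies (\ref{comm}) to $u$ alone, inserts (\ref{11a}), and then uses (\ref{comm}) on $c$ to trade $\bar{\partial}_{+}\bar{\partial}_{-}c$ for $\bar{\partial}_{-}\bar{\partial}_{+}c$. You instead apply (\ref{comm}) to both $u$ and $v$ and project onto $(\cos\alpha,\sin\alpha)$, which removes that term outright. Carrying this out, $\cos\alpha\,\bar{\partial}_{-}u+\sin\alpha\,\bar{\partial}_{-}v=0$ and $\cos\alpha\,\bar{\partial}_{+}u+\sin\alpha\,\bar{\partial}_{+}v=-\kappa\sin 2A\,\bar{\partial}_{+}c$. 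Hence the commutator contribution is $\kappa(\bar{\partial}_{+}\beta-\cos 2A\,\bar{\partial}_{-}\alpha)\bar{\partial}_{+}c$, and you get
\[
\sin 2A\,\bar{\partial}_{-}\bar{\partial}_{+}c=\cos 2A\,\bar{\partial}_{-}\beta\,\bar{\partial}_{+}c+\bar{\partial}_{+}\alpha\,\bar{\partial}_{-}c-\big(\bar{\partial}_{+}\beta-\cos 2A\,\bar{\partial}_{-}\alpha\big)\bar{\partial}_{+}c .
\]
Substituting (\ref{7a})--(\ref{8a}) and using $\cos 2A+2\sin^4A=1-\frac{1}{2}\sin^2 2A$ gives exactly the coefficients in (\ref{cd}). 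Two small corrections to your bookkeeping: no $(\bar{\partial}_{-}c)^2$ term ever arises, so nothing needs to cancel; and the $c$-commutator you set up as your "first equation" is not needed to isolate $\bar{\partial}_{-}\bar{\partial}_{+}c$. Its only role is to pass to the second identity, since (\ref{7a}) and (\ref{82301a}) give $c(\bar{\partial}_{-}\bar{\partial}_{+}-\bar{\partial}_{+}\bar{\partial}_{-})c=\frac{\gamma+1}{2(\gamma-1)\cos^2A}\big((\bar{\partial}_{+}c)^2-(\bar{\partial}_{-}c)^2\big)$, which is precisely the difference of the two lines of (\ref{cd}). Alternatively, project onto $(\cos\beta,\sin\beta)$. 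What your projection buys is that the coefficient of $\bar{\partial}_{-}\bar{\partial}_{+}c$ is $-\kappa\sin 2A$, which never vanishes in the hyperbolic region. The paper's one-component route, as sketched, leaves $\kappa(\sin\alpha+\sin\beta)=2\kappa\sin\sigma\cos A$ there, so it implicitly divides by $\sin\sigma$. That is harmless near the sonic state, where $\sigma\approx\theta_*$, but it needs the $v$-equation wherever the flow is horizontal. In exchange, the paper's version manipulates only one velocity component.
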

\begin{proof}
Using the commutator relation (\ref{comm}) for the variable $u$, we have
\begin{equation}\label{4203}
\begin{array}{rcl}
\bar{\partial}_{-} \bar{\partial}_{+}u- \bar{\partial}_{+} \bar{\partial}_{-}u=
\displaystyle\frac{1}{\sin2 A}\Big[\big(\cos2 A \bar{\partial}_{+}\beta- \bar{\partial}_{-}\alpha\big) \bar{\partial}_{-}u-
\big( \bar{\partial}_{+}\beta-\cos2 A \bar{\partial}_{-}\alpha\big) \bar{\partial}_{+}u\Big].
\end{array}
\end{equation}
Inserting (\ref{11a}) into (\ref{4203})
and using the commutator relation (\ref{comm}) for the variable $c$, we can get (\ref{cd}).
We refer the reader to \cite{LG3} for the details.
\end{proof}

\begin{lem}\label{simple}
The flow in a region adjacent to a straight characteristic line is a simple wave (see \cite{CaF}).
\end{lem}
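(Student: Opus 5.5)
The plan is to show, using only the Riemann invariants (\ref{Riemanni})--(\ref{52801}) and the characteristic equations of Section 2, two things: the state is constant along a straight characteristic line, and one Riemann invariant is then constant throughout the adjacent region swept out by the opposite family of characteristics. I treat the case of a straight $C_-$ line $L$; the $C_+$ case is symmetric, with the roles of $r_\pm$ and $\alpha,\beta$ interchanged. The region $\Omega$ adjacent to $L$ is understood to be the one in which every point lies on a $C_+$ characteristic that meets $L$, and the solution is assumed $C^1$ and hyperbolic ($q>c$) in $\Omega\cup L$.

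\emph{Step 1 (constant state along $L$).} Since $L$ is straight, $\beta$ is constant on $L$, so $\bar{\partial}_-\beta=0$ there. By (\ref{8a}) this gives $\Gamma\sin 2A\,\bar{\partial}_-c=0$ on $L$, and $\sin 2A\neq 0$ in the hyperbolic region. Let $L_0=\{\bar{\partial}_-c\neq0\}\cap L$, which is relatively open in $L$. On each component arc of $L_0$ we must then have $\Gamma\equiv0$, so $\tan^2A=\frac{3-\gamma}{\gamma+1}$ and $A$ is constant on that arc. By (\ref{pma}), $\bar{\partial}_-c=\frac{c\cot A}{1+\kappa\sin^2A}\bar{\partial}_-A=0$ on the arc, which contradicts its definition. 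Hence $L_0=\emptyset$, i.e.\ $c$ is constant on $L$. By Bernoulli's law (\ref{5802}) and $\sin A=c/q$, the quantities $q$ and $A$ are then constant on $L$, and $\sigma=\beta+A$ is constant as well. So $(u,v)$ is constant on $L$, and in particular $r_-$ takes a single value $r_-^0$ on $L$. This is the step I regard as the main obstacle: the coefficient $\Gamma$ in (\ref{8a}) can vanish, so straightness of $L$ does not directly force $\bar{\partial}_-c=0$. The argument above is how I plan to get around that degeneracy.

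\emph{Step 2 (one invariant constant in $\Omega$).} By (\ref{52801}), $\bar{\partial}_+r_-=0$, so $r_-$ is constant along every $C_+$ characteristic. Every point of $\Omega$ lies on a $C_+$ characteristic that meets $L$, hence $r_-\equiv r_-^0$ in $\Omega$.

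\emph{Step 3 (simple wave structure).} Take any $C_-$ characteristic $\Lambda$ in $\Omega$. Along $\Lambda$ we have $\bar{\partial}_-r_+=0$, so $r_+$ is constant on $\Lambda$; combined with Step 2, both $r_\pm$ are constant on $\Lambda$. Because of (\ref{1201}), $(\sigma,q)$ is a locally invertible function of $(r_+,r_-)$ when $q>c$. Therefore $(\sigma,q)$, and with it $A$ and $\beta=\sigma-A$, is constant along $\Lambda$, which means $\Lambda$ is a straight line carrying a constant state. Thus $\Omega$ is covered by straight $C_-$ characteristics with constant states on each of them, and the whole region is parametrized by the single quantity $r_+$ on the one-parameter family $r_-=r_-^0$. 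That is precisely a simple wave, in the sense in which the centered wave of the earlier lemma is one. If $r_+$ also happens to be constant, the flow is a constant state, the degenerate case of a simple wave.
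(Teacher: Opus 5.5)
Your proof is correct and is essentially the classical Riemann-invariant argument of Courant--Friedrichs, which the paper cites instead of giving a proof: by (\ref{52801}), $r_-$ is carried from $L$ into the adjacent region along the $C_+$ characteristics, and this forces every $C_-$ characteristic there to be straight and to carry a constant state. Your Step~1 handles the possible vanishing of $\Gamma$ in (\ref{8a}) with the open-arc argument, and it is a valid extra reduction from ``straight'' to ``constant state on $L$''; in the paper's application to $\overline{\mathrm{P}_{\varepsilon}\mathrm{D}_{\varepsilon}}$ this step is not needed, because (\ref{br2}) prescribes the constant state there.
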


From (\ref{pma}) and (\ref{cd}), we have the following weighted characteristic decompositions
\begin{equation}\label{wcd}
\left\{
  \begin{array}{ll}
 \displaystyle c\bar{\partial}_{-}\Big(\frac{\bar{\partial}_{+}c}{\cos^2 A}\Big)~=~
\mathcal{F} \frac{\bar{\partial}_{+}c}{\cos^2 A}\cdot\frac{\bar{\partial}_{-}c}{\cos^2 A}+\frac{\gamma+1}{2(\gamma-1)}\Big(\frac{\bar{\partial}_{+}c}{\cos^2 A}\Big)^2,
\\[16pt]
   \displaystyle c\bar{\partial}_{+}\Big(\frac{\bar{\partial}_{-}c}{\cos^2 A}\Big)~=~
\mathcal{F}
\frac{\bar{\partial}_{+}c}{\cos^2 A}\cdot\frac{\bar{\partial}_{-}c}{\cos^2 A}+\frac{\gamma+1}{2(\gamma-1)}\Big(\frac{\bar{\partial}_{-}c}{\cos^2 A}\Big)^2,
  \end{array}
\right.
\end{equation}
where
$$
\mathcal{F}=\frac{(\gamma+1)+(2\gamma-14)\sin^2 A\cos^2 A+2(\gamma+1)\sin^2A(1+\sin^2A)}{2(\gamma-1)}.
$$

%Using the commutator relation for $\bar{\partial}_{+}c$ and $\bar{\partial}_{-}c$ and recalling (\ref{cd}), we also have the characteristic decompositions for higher order derivatives
%\begin{equation}\label{cd2}
%\left\{
%  \begin{array}{ll}
%  \begin{aligned}
% \displaystyle c\bar{\partial}_{-}(\bar{\partial}_{+}\bar{\partial}_{+}c)~=~&
%\left(\frac{3(\gamma+1)\bar{\partial}_{+}c}{2(\gamma-1)\cos^2A}+
%\frac{(\gamma+1-4\sin^2A)\bar{\partial}_{-}c}{\gamma-1}\right)\bar{\partial}_{+}\bar{\partial}_{+}c\\[4pt]
%&\qquad ~+~\mathcal{Q}_{1}\frac{(\bar{\partial}_{+}c)^3}{c\cos^4 A}+\mathcal{Q}_{2}\frac{(\bar{\partial}_{+}c)^2\bar{\partial}_{-}c}{c\cos^4 A},
%\end{aligned}
%\\[36pt]
%\begin{aligned}
%   \displaystyle c\bar{\partial}_{+}(\bar{\partial}_{-}\bar{\partial}_{-}c)~=~&
%\left(\frac{3(\gamma+1)\bar{\partial}_{-}c}{2(\gamma-1)\cos^2A}+
%\frac{(\gamma+1-4\sin^2A)\bar{\partial}_{+}c}{\gamma-1}\right)\bar{\partial}_{-}\bar{\partial}_{-}c~\\[4pt]
%&\qquad~+~\mathcal{Q}_{1}\frac{(\bar{\partial}_{-}c)^3}{c\cos^4 A}+\mathcal{Q}_{2}\frac{(\bar{\partial}_{-}c)^2\bar{\partial}_{+}c}{c\cos^4 A},
%\end{aligned}
%  \end{array}
%\right.
%\end{equation}
%where $\mathcal{Q}_{i}=a_{i1}+a_{i2}\sin^2 A+ a_{i3}\sin^4A$ ($i=1, 2, 3$), and $a_{ij}$ ($i, j=1, 2, 3$) are constants.

%We shall use (\ref{cd}) and (\ref{cd2}) to estimate the second order derivatives of $c$.

\section{Interaction of an oblique sonic shock with a centered simple wave}
In order to overcome the difficulty caused by the hyperbolic degeneracy on the sonic line, we consider a regularized problem by assuming that the ramp angle of the lower wall satisfies $\theta_w=\theta_*-\varepsilon$, with $\varepsilon$ being an arbitrary small positive constant.
Under this assumption, there is an oblique shock emanating from $\mathrm{O}$, with a downstream supersonic flow state $(u_1, v_1)$.
As shown in Figure \ref{Figure4}, the leading characteristic line of the centered expansion fan meets the oblique shock at a point $\mathrm{P}_{\varepsilon}$.
From the point $\mathrm{P}_{\varepsilon}(x_{_{\mathrm{P}_{\varepsilon}}},y_{_{\mathrm{P}_{\varepsilon}}})$, we draw a forward straight $C_{-}$ characteristic line of the constant state
$(u_1, v_1)$, and this characteristic line intersects the ramp at a point $\mathrm{D}_{\varepsilon}(x_{_{\mathrm{D}_{\varepsilon}}},y_{_{\mathrm{D}_{\varepsilon}}})$.
To study the interaction of the oblique supersonic shock with the centered expansion wave, we consider (\ref{42501}) with the following boundary conditions:
\begin{equation}\label{br1}
v(x,y)=u(x,y)\tan\theta_w\quad \mbox{on}\quad y=x\tan\theta_w, \quad x>x_{_{\mathrm{D}_{\varepsilon}}};
\end{equation}
\begin{equation}\label{br2}
(u, v)=(u_1, v_1)\quad \mbox{on}\quad \overline{\mathrm{P_{\varepsilon}D_{\varepsilon}}};
\end{equation}
\begin{equation}\label{br3}
(\rho  u -\rho_fu_f)(u -u_f)+  (\rho  v -\rho_fv_f)(v -v_f)=0\quad \mbox{on}\quad y=\psi_{\varepsilon}(x), \quad x>x_{_{\mathrm{P}_{\varepsilon}}};
\end{equation}
\begin{equation}\label{br4}
\left\{
  \begin{array}{ll}
   \displaystyle \frac{{\rm d}\psi_{\varepsilon}(x)}{{\rm d}x}=-\frac{u(x, \psi_{\varepsilon}(x))-u_f(x, \psi_{\varepsilon}(x))}{v(x, \psi_{\varepsilon}(x))-v_f(x, \psi_{\varepsilon}(x))}, & \hbox{$x>x_{_{\mathrm{P}_{\varepsilon}}}$,} \\
    \psi_{\varepsilon}(x_{_{\mathrm{P}_{\varepsilon}}})=y_{_{\mathrm{P}_{\varepsilon}}},
  \end{array}
\right.
\end{equation}
where $(u_f, v_f)(x, y)=(\bar{u}, \bar{v})(\eta)$ and $\eta=\arctan(\frac{y-1}{x})$.
Here, the unknown function $y=\psi_{\varepsilon}(x)$ represents the shock propagating from the point $\mathrm{P}_{{\varepsilon}}$.
%=============
\begin{figure}[htbp]
\begin{center}
\includegraphics[scale=0.46]{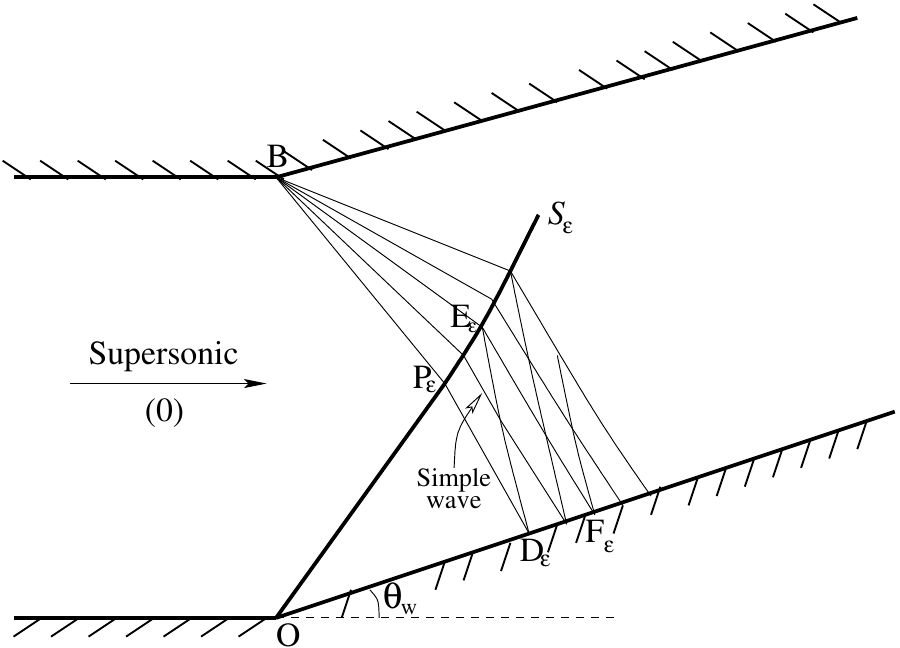}
\caption{\footnotesize Interaction of an oblique supersonic shock with a centered expansion fan wave.}
\label{Figure4}
\end{center}
\end{figure}
%==========

%\subsection{Estimates along the shock wave}
%The oblique shock will continue to propagate from the point $P_{\varepsilon}$ and become curved.
For convenience, we use $S_{\varepsilon}$ to denote the curved shock propagating from the point $\mathrm{P}_{{\varepsilon}}$.
Let $\phi(x, \psi(x))=\arctan\psi_{\varepsilon}'(x)$. Then by (\ref{br3}) and (\ref{br4}),
along the shock ${\it S}_{\varepsilon}$, we have
\begin{equation}\label{52401a}
\left\{
  \begin{array}{ll}
    (v -v_f)\sin\phi+(u -u_f)\cos\phi=0, \\[4pt]
    (\rho  u -\rho_fu_f)\sin\phi-  (\rho  v -\rho_fv_f)\cos\phi=0.
  \end{array}
\right.
\end{equation}
%where we still use $(u, v, \rho)$ to denote the downstream flow of the shock.
Eliminating $\phi$ from (\ref{52401a}), we get
\begin{equation}\label{Polar}
G(u_f,v_f, u, v):= (\rho  u -\rho_fu_f)(u -u_f)+  (\rho  v -\rho_fv_f)(v -v_f)=0\quad \mbox{along}\quad S_{\varepsilon}.
\end{equation}

 We denote by $N$ and $L$
the components of $(u, v)$ normal and tangential to the shock direction, respectively.
Then we have
\begin{equation}\label{62604}
L=u\cos \phi+v\sin \phi, \quad N=u\sin \phi- v\cos \phi,
\end{equation}
\begin{equation}\label{52304}
u=N\sin\phi+L\cos\phi\quad \mbox{and}\quad v=L\sin\phi-N\cos\phi.
\end{equation}
The Rankine-Hugoniot conditions (\ref{52401a}) can be written as
\begin{equation}\label{RH}
\left\{
  \begin{array}{ll}
  \rho_f N_f=\rho N, \\[4pt]
    L_{f}=L,\\[4pt]
   N_f^2+2h(\tau_f)= N^2+2h(\tau),
  \end{array}
\right.
\end{equation}
where  $h(\tau)=\frac{\gamma \tau^{1-\gamma}}{\gamma-1}$ is the enthalpy.

We define the directional derivative
$$
\bar{\partial}_{s}=\cos\phi\partial_x+\sin\phi\partial_y \quad \mbox{along}\quad {\it S}_{\varepsilon}.
$$
From (\ref{Polar}) we have that along ${\it S}_{\varepsilon}$,
\begin{equation}\label{6609}
\begin{aligned}
&G_{u}\bar{\partial}_{s}u+G_v\bar{\partial}_{s}v-(\rho u-\rho_fu_f)\bar{\partial}_{s}u_f-
(\rho v-\rho_fv_f)\bar{\partial}_{s}v_f\\&\qquad\qquad\qquad -(u-u_f)\bar{\partial}_{s}(\rho_fu_f)-(v-v_f)\bar{\partial}_{s}(\rho_fv_f)=0,
\end{aligned}
\end{equation}
where
$$
G_u=\Big(\rho-\frac{\rho u^2}{c^2}\Big)(u-u_f)+(\rho u-\rho_fu_f)-v(v-v_f)\frac{\rho u}{c^2}
$$
and
$$
G_v=\Big(\rho-\frac{\rho v^2}{c^2}\Big)(v-v_f)+(\rho v-\rho_fv_f)-u(u-u_f)\frac{\rho v}{c^2}.
$$

Using (\ref{6603}), (\ref{52304}), (\ref{RH}) we have
\begin{equation}\label{6608}
\begin{aligned}
&(\rho u-\rho_fu_f)\bar{\partial}_{s}u_f+
(\rho v-\rho_fv_f)\bar{\partial}_{s}v_f\\=&
\big[(\rho v-\rho_fv_f)\cos\beta_f-(\rho u-\rho_fu_f)\sin\beta_f\big]\frac{q_f\bar{\partial}_{s}r_{+}^{f}}{2\cos A_f}
\\=&L_f(\rho-\rho_f)\sin(\phi-\beta_f)\frac{q_f\bar{\partial}_{s}r_{+}^{f}}{2\cos A_f},
\end{aligned}
\end{equation}
where $r_{+}^f(x, y)=\bar{r}_{+}(\eta)=\arctan\Big(\frac{\bar{v}(\eta)}{\bar{u}(\eta)}\Big)+\int_{q_{0}}^{\bar{q}(\eta)}
\frac{\sqrt{q^{2}-c^{2}}}{qc}{\rm d}q$, $A_f(x, y)=\bar{A}(\eta)$, and $q_f(x, y)=\bar{q}(\eta)$.

From (\ref{6603}) and (\ref{6606}) we have
\begin{equation}
\bar{\partial}_{s}(\rho_fu_f)=\left(-\frac{\rho_fu_f}{\sin(2A_f)}-\frac{\rho_fq_f\sin\beta_f}{2\cos A_f}\right)\bar{\partial}_{s}r_{+}^{f}
\end{equation}
and
\begin{equation}
\bar{\partial}_{s}(\rho_fv_f)=\left(-\frac{\rho_fv_f}{\sin(2A_f)}+\frac{\rho_fq_f\cos\beta_f}{2\cos A_f}\right)\bar{\partial}_{s}r_{+}^{f}.
\end{equation}
Thus, we get
\begin{equation}\label{6607}
\begin{aligned}
&(u-u_f)\bar{\partial}_{s}(\rho_fu_f)+(v-v_f)\bar{\partial}_{s}(\rho_fv_f)\\
~=~&(N-N_f)\Big(\sin\phi\bar{\partial}_{s}(\rho_fu_f)-\cos\phi\bar{\partial}_{s}(\rho_fv_f)\Big)\\~=~&
(N-N_f)\left(-\frac{\rho_fu_f\sin\phi}{\sin(2A_f)}-\frac{\rho_fq_f\sin\beta_f\sin\phi}{2\cos A_f}
+\frac{\rho_fv_f\cos\phi}{\sin(2A_f)}-\frac{\rho_fq_f\cos\beta_f\cos\phi}{2\cos A_f}\right)\bar{\partial}_{s}r_{-}^{f}
\\~=~&(N-N_f)\left(-\frac{\rho_fq_f\sin(\phi-\sigma_f)}{\sin(2A_f)}-\frac{\rho_fq_f\cos(\phi-\beta_f)}{2\cos A_f}\right)\bar{\partial}_{s}r_{+}^{f}.
\end{aligned}
\end{equation}

Combining (\ref{6609}), (\ref{6608}), and (\ref{6607}) we get
\begin{equation}\label{52402}
\begin{aligned}
G_u\bar{\partial}_{s}u+G_v\bar{\partial}_{s}v
=\mathcal{L}\bar{\partial}_{s}r_{+}^{f} \quad \mbox{along}\quad {\it S}_{\varepsilon},
\end{aligned}
\end{equation}
where
\begin{equation}\label{l}
\begin{aligned}
\mathcal{L}&=\frac{q_f(\rho-\rho_f)L\sin(\phi-\beta_f)}{2\cos A_f}-\rho_fq_f(N-N_f)\left(\frac{\sin(\phi-\sigma_f)}{\sin(2A_f)}+\frac{\cos(\phi-\beta_f)}{2\cos A_f}\right)
\\&=\frac{q_f(\rho-\rho_f)}{\sin(2A_f)}\Big(L\sin(\phi-\beta_f)\sin A_f+N\big(\sin(\phi-\sigma_f)+\cos(\phi-\beta_f)\sin A_f\big)\Big)\\&=
\frac{qq_f(\rho-\rho_f)}{\sin(2A_f)}\Big(\cos(\phi-\sigma)\sin(\phi-\beta_f)\sin A_f+\sin(\phi-\sigma)\big(\sin(\phi-\sigma_f)+\cos(\phi-\beta_f)\sin A_f\big)\Big)
\\&=
\frac{qq_f(\rho-\rho_f)}{\sin(2A_f)}\Big(\sin(\phi-\beta_f+\phi-\sigma)\sin A_f+\sin(\phi-\sigma)\sin(\phi-\sigma_f)\Big)\\&=
\frac{qq_f(\rho-\rho_f)}{\sin(2A_f)}\Big(\sin(\phi-\sigma_f+\phi-\sigma+A_f)\sin A_f+\sin(\phi-\sigma)\sin(\phi-\sigma_f)\Big)\\&=
\frac{qq_f(\rho-\rho_f)}{\sin(2A_f)}\Big(\sin(\phi-\sigma_f+\phi-\sigma)\cos A_f\sin A_f+\sin(\phi-\sigma)\sin(\phi-\sigma_f)\cos^2 A_f\\&\qquad\qquad\qquad\qquad+\cos(\phi-\sigma)\cos(\phi-\sigma_f)\sin^2 A_f\Big)>0,
\end{aligned}
\end{equation}
since $0<\phi-\sigma<\frac{\pi}{2}$, $0<A<\frac{\pi}{2}$, and $0<\phi-\sigma_f<\frac{\pi}{2}$.

We decompose $\bar{\partial}_{s}$ in two characteristic directions
\begin{equation}\label{72103}
\bar{\partial}_{s}=t_{+}\bar{\partial}_{+}+t_{-}\bar{\partial}_{-},
\end{equation}
where
$$
t_{+}=\frac{\sin(\phi-\beta )}{\sin(2A )}\quad \mbox{and}\quad t_{-}=\frac{\sin(\alpha -\phi)}{\sin(2A )}.
$$
Then by (\ref{11a}) and (\ref{72804a}) we have
\begin{equation}
\bar{\partial}_{s}u =\kappa (t_{+}\sin\beta \bar{\partial}_{+}c- t_{-}\sin\alpha \bar{\partial}_{-}c)
\quad \mbox{and}\quad  \bar{\partial}_{s}v =-\kappa (t_{+}\cos\beta \bar{\partial}_{+}c- t_{-}\cos\alpha \bar{\partial}_{-}c).
\end{equation}

The vector $(G_u, G_v)$ is actually a normal vector of the shock polar for the given front-side  state $(u_f, v_f)$ at the point $(u, v)$.
Let the angle $\chi$ be defined by
 $(\cos\chi, \sin\chi)=\Big(\frac{G_u}{\sqrt{G_u^2+G_v^2}}, \frac{G_v}{\sqrt{G_u^2+G_v^2}}\Big)$. Then, $\chi$ can also be seen as a function of $(u_f, v_f, u, v)$.
Hence, (\ref{52402}) can be written as
\begin{equation}\label{52505}
\kappa t_{+}\sin(\beta-\chi)\bar{\partial}_{+}c=-
\kappa t_{-}\sin(\chi-\alpha)\bar{\partial}_{-}c+\mathcal{G}^{-1}\mathcal{L}\bar{\partial}_{s}r_{+}^{f}\quad \mbox{along}\quad {\it S}_{\varepsilon},
\end{equation}
where $\mathcal{G}=\sqrt{G_u^2+G_v^2}$.

We define the following constants that related to the sonic shock:
\begin{equation}
\begin{aligned}
&\mathcal{L}_{*}=\mathcal{L}(u_0, 0, u_{*}, v_{*}), \quad \mathcal{G}_{*}=\sqrt{(G_u^2+G_v^2)(u_0, 0, u_{*}, v_{*})},\quad \phi_*=\arctan\Big(\frac{v_*}{u_0-u_*}\Big),\\&\alpha_*=\arctan\big(\frac{v_*}{u_*}\big)+\frac{\pi}{2},\quad
\beta_*=\arctan\big(\frac{v_*}{u_*}\big)-\frac{\pi}{2},\quad \chi_*=\chi(u_0, 0, u_{*}, v_{*}),\\& \ell_{*}=\mbox{arclength}(\overline{\mathrm{PD}}), \quad d_*= -\frac{4(\gamma-1)\mathcal{L}_*\sin(\phi_*+A_0)\cos A_0p'(\tau_0)}{\mathcal{G}_*\sin(\phi_*-\beta_*)\sin(\beta_*-\chi_*)\tau_0p''(\tau_0)\sqrt{x_{_\mathrm{P}}^2+(y_{_\mathrm{P}}-1)^2}}.
%\quad  a_{*}=\sin(\phi_{*}-\beta_{*})\sin(\beta_{*}-\chi_{*}), \\&\quad b_{*}=\sin(\alpha_{*}-\phi_{*})\sin(\chi_{*}-\alpha_{*}),  , \quad d_{*}=\frac{(\gamma-1)\mathcal{L}_{*}\sin(\phi_{*}+A_0)\sin A_0\cos A_0}{x_{_P}\sin(\phi_{*}-\beta_{*})\sin(\beta_{*}-\chi_{*})}\bar{r}_{+}'(-A_0).
\end{aligned}
\end{equation}

\begin{prop}
For the constants $\alpha_*$, $\beta_*$, $\phi_*$, and $\chi_*$ defined above, there hold the relations
\begin{equation}
\sin(\alpha_*-\phi_*)>0, \quad \sin(\phi_*-\beta_*)>0, \quad
\sin(\beta_*-\chi_*)<0,\quad \mbox{and}\quad \sin(\chi_*-\alpha_*)<0.
\end{equation}
\end{prop}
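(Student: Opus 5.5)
The plan is to evaluate all four angles explicitly at the sonic state and reduce each inequality to an elementary sign condition.

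\textbf{Step 1: the characteristic angles.} At the sonic point $(u_*,v_*)$ we have $q_*=c_*$. Hence $A_*=\pi/2$ and $\sigma_*=\theta_*$, so by (\ref{tau}) $\alpha_*=\theta_*+\pi/2$ and $\beta_*=\theta_*-\pi/2=\alpha_*-\pi$, consistent with the definitions. Consequently
$$\sin(\chi_*-\alpha_*)=\sin(\chi_*-\beta_*-\pi)=\sin(\beta_*-\chi_*),$$
so the third and fourth inequalities are one and the same statement.

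\textbf{Step 2: the first two inequalities.} Using $\alpha_*=\theta_*+\pi/2$ and $\beta_*=\theta_*-\pi/2$,
$$\sin(\alpha_*-\phi_*)=\cos(\phi_*-\theta_*),\qquad \sin(\phi_*-\beta_*)=\cos(\phi_*-\theta_*).$$
Both are positive once we know $0<\phi_*-\theta_*<\pi/2$, i.e.\ $0<\phi-\sigma<\pi/2$, which the paper already uses in (\ref{l}). I would justify this from (\ref{52401aa}). The shock tangent is orthogonal to the velocity jump $(u_*-u_0,v_*)$, with $u_*<u_0$ and $v_*>0$, so the shock makes an angle in $(0,\pi/2)$ with the $x$-axis. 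By the Rankine--Hugoniot relations (\ref{RH}), the tangential component is continuous and the normal component decreases. This means the downstream velocity is turned toward the shock but stays strictly between the $x$-axis and the shock direction: $0<\theta_*<\phi_*<\pi/2$.

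\textbf{Step 3: reducing $\sin(\beta_*-\chi_*)<0$ to a sign condition.} Since $(\cos\chi,\sin\chi)$ is parallel to $\nabla_{(u,v)}G$,
$$\mathcal{G}\sin(\beta_*-\chi_*)=G_u\sin\beta_*-G_v\cos\beta_*=-(G_u\cos\theta_*+G_v\sin\theta_*).$$
It therefore suffices to show $\nabla G\cdot\hat w_*>0$, where $\hat w_*=(\cos\theta_*,\sin\theta_*)$ is the unit downstream velocity. Write $w=(u,v)$, $m=\rho w$, and $w_f=(u_0,0)$. From the formulas for $G_u,G_v$ (using Bernoulli's law, $\nabla_w\rho=-\rho w/c^2$),
$$\nabla_w G=\rho\Big(I-\frac{w\,w^{T}}{c^{2}}\Big)(w-w_f)+(m-m_f).$$
At the sonic point $c_*^2=q_*^2$, so $I-w w^T/c^2$ is exactly the orthogonal projection onto $\hat w_*^{\perp}$. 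The first term is therefore annihilated by $\hat w_*$, and
$$\nabla G\cdot\hat w_*=q_*^{-1}(m_*-m_0)\cdot w_*.$$

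\textbf{Step 4: the sign of $(m_*-m_0)\cdot w_*$.} Decompose along the shock as in (\ref{62604}): $w_*=Lt+N n$ and $w_0=Lt+N_0 n$, with $\rho_*N=\rho_0N_0$ by (\ref{RH}). Then
$$(m_*-m_0)\cdot w_*=\rho_*(L^2+N^2)-\rho_0(L^2+N_0N)=L^2(\rho_*-\rho_0)+N(\rho_*N-\rho_0N_0)=L^2(\rho_*-\rho_0).$$
This is strictly positive because the shock is compressive ($\rho_*>\rho_0$ by the entropy condition) and oblique ($L=u_0\cos\phi_*\neq0$). Hence $\sin(\beta_*-\chi_*)<0$, and by Step 1 also $\sin(\chi_*-\alpha_*)<0$.

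The main obstacle I expect is Step 3: identifying the direction $\chi_*$ and seeing that the sonic degeneracy turns $I-ww^T/c^2$ into a projection that kills the messy terms. Steps 1, 2 and 4 are bookkeeping. One caution for Step 2: the definition $\phi_*=\arctan\big(v_*/(u_0-u_*)\big)$ should be reconciled with the tangent relation $\tan\phi=(u_0-u)/v$ from (\ref{52401aa}). I would use the geometric characterization $0<\phi_*-\theta_*<\pi/2$, which is what the argument actually needs.
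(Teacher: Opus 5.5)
Your proposal is correct and follows the paper's route. The first two inequalities are immediate because $\theta_*$ and $\phi_*$ both lie in $(0,\pi/2)$. That is all the paper's ``by the definitions'' uses, and it holds under either reading of $\phi_*$, so your ordering $\theta_*<\phi_*$ is more than needed. The last two follow, exactly as in the paper, from $\nabla G\cdot w_*>0$ at the sonic state, where $q_*=c_*$ annihilates the term $\rho(I-ww^T/c^2)(w-w_f)$ against $w_*$.

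Your Rankine--Hugoniot evaluation $(m_*-m_0)\cdot w_*=L^2(\rho_*-\rho_0)>0$ is the correct form of the identity the paper writes down. The paper's printed right-hand side, $(\rho_*-\rho_0)u_0\cos^2\phi_*+\rho_*v_*^2$, is garbled, although its sign conclusion is right.
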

\begin{proof}
By the definitions of $\alpha_*$, $\beta_*$ and $\phi_*$ we have
 $$
\sin(\alpha_*-\phi_*)>0\quad \mbox{and} \quad \sin(\phi_*-\beta_*)>0.
 $$

In view of $u_*^2+v_*^2=c_*^2$, we have
$$
\begin{aligned}
&G_u(u_*, v_*, u_0, 0)u_*+G_v(u_*, v_*, u_0, 0)v_*\\
~=~&(\rho_*u_*-\rho_0u_0)u_*+\rho_*v_*^2=(\rho_*-\rho_0)u_0\cos^2\phi_*+\rho_*v_*^2>0.
\end{aligned}
$$
This implies
$$
\sin(\beta_*-\chi_*)<0\quad \mbox{and}\quad \sin(\chi_*-\alpha_*)<0;
$$
see Figure \ref{Figure2} (left).
This completes the proof.
\end{proof}

From (\ref{Polar}) we see that along the shock ${\it S}_{\varepsilon}$,  the variables $\phi$, $\alpha$, $\beta$, $\chi$, and $\mathcal{L}$ can be treated as algebraic functions of the upstream flow states $(r_{-}^{f}, r_{+}^{f})$ and the Mach angle $A$ of the downstream flow, i.e.,
$$
(\phi, \alpha, \beta, \chi, \mathcal{L})=\big(\hat{\phi}, \hat{\alpha}, \hat{\beta}, \hat{\chi}, \hat{\mathcal{L}}\big)\big(r_{-}^{f}, r_{+}^{f}, A\big)\quad \mbox{along}\quad {\it S}_{\varepsilon}.
$$
Therefore,
for any small $\epsilon>0$, there exists a small
  $\varrho>0$ such that when $\eta_0<\eta<\eta_0+\varrho$ and $\frac{\pi}{2}-\varrho<A<\frac{\pi}{2}$ then
\begin{equation}\label{91201a}
\Big|\big(\hat{\phi}, \hat{\alpha}, \hat{\beta}, \hat{\chi}, \hat{\mathcal{L}}\big)\big(\bar{r}_{-}(\eta), \bar{r}_{+}(\eta), A\big)-(\phi_{*}, \alpha_{*}, \beta_{*}, \chi_{*}, \hat{\mathcal{L}}_{*})\Big|<\epsilon.
\end{equation}
%where $(\phi, \alpha, \beta, \chi)=(\hat{\phi}, \hat{\alpha}, \hat{\beta}, \hat{\chi})(r_{-}^{f}, r_{+}^{f}, A)$.

%\subsection{Estimates for the first order derivatives}
We first consider (\ref{42501}) with the boundary  conditions (\ref{br2})--(\ref{br4}). The problem (\ref{42501}), (\ref{br1})--(\ref{br3}) is a typical free boundary problem. By the result in Li and Yu \cite{Li-Yu} (Chapter 3.2), we know that this problem admits a local solution. Moreover, by Lemma \ref{simple} we know that the solution is a simple wave with straight $C_{-}$ characteristic lines issued from ${\it S}_{\varepsilon}$.
Since the centered simple wave issued from $B$ is a rarefaction simple wave, by (\ref{6606}) we have
$$
\bar{\partial}_{s}r_{+}^{f}>0\quad \mbox{along}\quad {\it S}_{\varepsilon}.
$$
Combining this with (\ref{l}) and (\ref{52505}), we have that the simple wave  satisfies
$$
\bar{\partial}_{+}c<0.
$$
Combining this with (\ref{7a}), (\ref{8a}), and (\ref{72103}), we find that the solution satisfies
$$
\bar{\partial}_{s}\beta>0\quad \mbox{along}\quad {\it S}_{\varepsilon}.
$$
So, the simple wave is a rarefaction simple wave, and the straight characteristic lines emanating from ${\it S}_{\varepsilon}$ will not intersect each other before reaching the solid wall.
From the point $\mathrm{D}_{\varepsilon}$, we draw a $C_{+}$ cross characteristic curve of the simple wave. When $\varepsilon$ is sufficiently small, this characteristic curve intersects ${\it S}_{\varepsilon}$ at a point $\mathrm{E}_{\varepsilon}(x_{_{\mathrm{E}_{\varepsilon}}}, y_{_{\mathrm{E}_{\varepsilon}}})$.
So, the free boundary  problem (\ref{42501}), (\ref{br1})--(\ref{br3}) admits a rarefaction simple wave solution on a curvilinear triangle domain $\Delta_{\varepsilon}$ bounded by $\overline{\mathrm{P}_{\varepsilon}\mathrm{D}_{\varepsilon}}$, $\wideparen{\mathrm{D}_{\varepsilon}\mathrm{E}_{\varepsilon}}$, and $\wideparen{\mathrm{P}_{\varepsilon}\mathrm{E}_{\varepsilon}}$.

%=============
\begin{figure}[htbp]
\begin{center}
\includegraphics[scale=0.35]{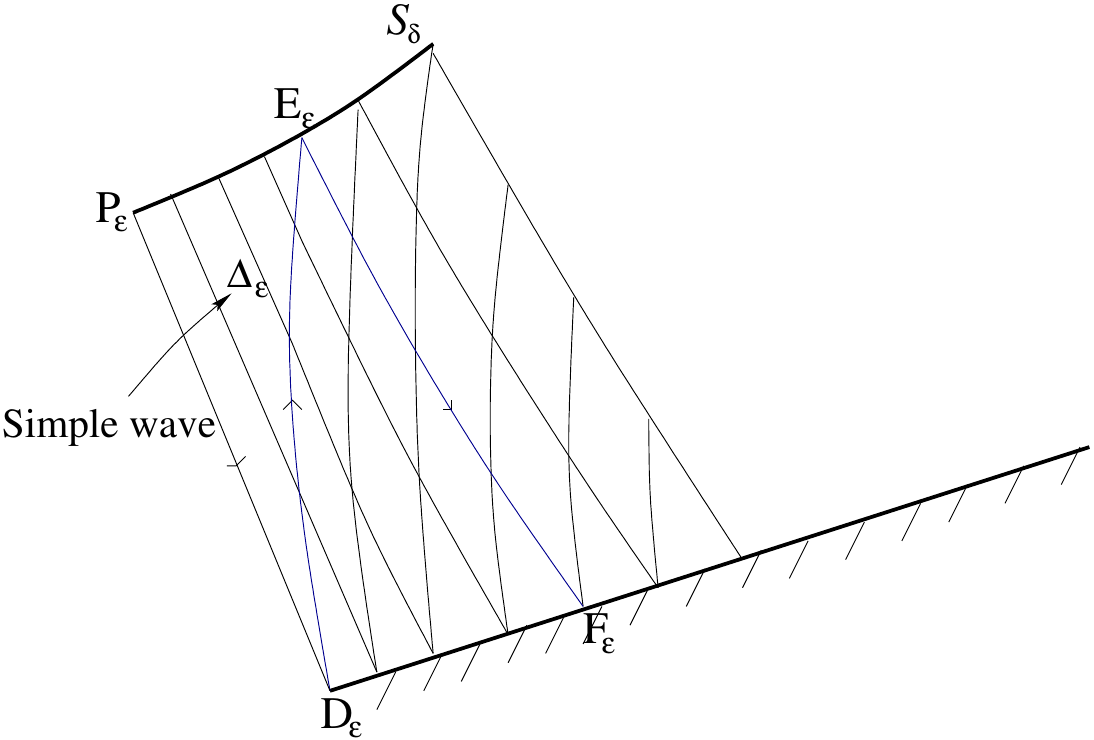}
\caption{\footnotesize Simple wave issued from the shock.}
\label{Figure5}
\end{center}
\end{figure}
%==========

\begin{lem}\label{lem3}
When $\varepsilon$ is sufficiently small, $\eta_0<\arctan\big(\frac{y_{_{\mathrm{E}_{\varepsilon}}}-1}{x_{_{\mathrm{E}_{\varepsilon}}}}\big)<\eta_1$. Moreover,
$$
(x_{_{\mathrm{E}_{\varepsilon}}}, y_{_{\mathrm{E}_{\varepsilon}}})\rightarrow (x_{_\mathrm{P}}, y_{_\mathrm{P}}) \quad \mbox{as}\quad \varepsilon\rightarrow 0.
$$
\end{lem}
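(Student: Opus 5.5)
The plan is to show that, as $\varepsilon\to0$, the whole curvilinear triangle $\Delta_\varepsilon$ collapses onto the segment $\overline{\mathrm{PD}}$. In particular its vertex $\mathrm{E}_\varepsilon$ must then converge to $\mathrm{P}$. This immediately gives the angular bound: since $\eta_0<\eta_1$ and $\mathrm{P}$ lies on the leading characteristic $\eta=\eta_0$, any point of ${\it S}_\varepsilon$ beyond $\mathrm{P}_\varepsilon$ that is close to $\mathrm{P}$ has $\eta\in(\eta_0,\eta_1)$.

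\textbf{Step 1 (data near $\mathrm{P}_\varepsilon$).} As $\varepsilon\to0$, the regularized quantities converge to the sonic ones by continuity of the shock polar:
\[
(u_1,v_1)\to(u_*,v_*),\qquad \mathrm{P}_\varepsilon\to\mathrm{P},\qquad \mathrm{D}_\varepsilon\to\mathrm{D},\qquad A_1:=\arcsin(c_1/q_1)\to\tfrac{\pi}{2}.
\]
The segment $\overline{\mathrm{P}_\varepsilon\mathrm{D}_\varepsilon}$ is a straight $C_-$ line of the constant state, with angle $\beta_1=\sigma_1-A_1\to\beta_*=\theta_*-\frac{\pi}{2}$. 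The $C_+$ direction of the same state has angle $\alpha_1=\sigma_1+A_1\to\alpha_*$, and $\alpha_*-\beta_*=\pi$. So at the sonic limit the two characteristic families coincide as lines; this degeneracy is what forces the collapse.

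\textbf{Step 2 (the $C_+$ curve from $\mathrm{D}_\varepsilon$ stays close to $\overline{\mathrm{P}_\varepsilon\mathrm{D}_\varepsilon}$).} I will argue by contradiction and continuity. Along the $C_+$ curve from $\mathrm{D}_\varepsilon$ the angle $\alpha$ satisfies $\bar{\partial}_+\alpha=\Gamma\cos^2A\,\bar{\partial}_+\beta$ by (\ref{6a}). In the simple wave with straight $C_-$ lines, $\beta$ increases monotonically from $\beta_1$. Moreover, $A$ remains close to $\pi/2$ as long as $\beta-\beta_1$ is small, because $A$ is an algebraic function of $\beta$ along the Riemann invariant level $r_+=r_+(u_1,v_1)$.

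Suppose $\mathrm{E}_\varepsilon$ stays a fixed distance from $\mathrm{P}$ along a subsequence. Then the straight $C_-$ lines issued from ${\it S}_\varepsilon$ between $\mathrm{P}_\varepsilon$ and $\mathrm{E}_\varepsilon$ sweep out an angle bounded below. This is because $\bar{\partial}_s\beta>0$ with a lower bound coming from (\ref{52505}) and (\ref{91201a}) together with $\bar{\partial}_sr_+^f\ge c>0$ near $\eta_0$. The result is a definite change of $\beta$, hence a definite drop of $A$ below $\pi/2$, along the $C_+$ curve. On the other hand, as $\varepsilon\to0$ the $C_+$ direction at $\mathrm{D}_\varepsilon$ tends to point exactly back along $\overline{\mathrm{D}_\varepsilon\mathrm{P}_\varepsilon}$, reversed. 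I then compare the $C_+$ curve with the ``cross'' direction $\alpha\approx\beta+2A$. Integrating $dy/dx=\tan\alpha$ and using that $\alpha$ stays within $O(\beta-\beta_1)+O(\frac{\pi}{2}-A)$ of $\alpha_1$, the curve is trapped in a thin wedge around the line $\mathrm{D}_\varepsilon\mathrm{P}_\varepsilon$. This forces it to meet ${\it S}_\varepsilon$ within distance $o(1)$ of $\mathrm{P}_\varepsilon$, which is a contradiction.

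The cleanest way to make this quantitative, which I will try first, is to parametrize the simple wave by $\beta$ and write the $C_+$ curve as an ODE in $\beta$. The width of the fan needed to reach ${\it S}_\varepsilon$ from $\mathrm{D}_\varepsilon$ is then controlled by $\cos A_1\to0$.

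\textbf{Main obstacle.} The hard part is the uniform lower bound on $\bar{\partial}_s\beta$ and the fan-width estimate near the degeneracy. The coefficients $t_\pm$ in (\ref{52505}) involve $\sin 2A\to0$, so I must check that the ratio stays controlled using (\ref{91201a}). This requires the signs $\sin(\beta_*-\chi_*)<0$ and $\sin(\phi_*-\beta_*)>0$ from the preceding proposition.
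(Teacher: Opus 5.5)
\textbf{There is a genuine gap in Step 2.} Your overall picture is correct: $\Delta_\varepsilon$ collapses onto $\overline{\mathrm{PD}}$, and that gives both assertions. But the thin-wedge trapping needs $|\alpha-\alpha_1|=O(\beta-\beta_1)+O(\frac{\pi}{2}-A)$ to be small along the \emph{whole} $C_+$ curve from $\mathrm{D}_\varepsilon$. That means the curve crosses only a small part of the fan before reaching $S_\varepsilon$, which is exactly what the lemma asserts.

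Under your contradiction hypothesis you have just derived that $\beta-\beta_1$ and $\frac{\pi}{2}-A$ become of definite size along the curve. So the wedge is not thin, no contradiction arises, and the argument is circular.

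A bootstrap with soft bounds does not repair it either. By (\ref{82406a}), $\cos^2A_b$ grows roughly linearly in $\eta$ on the shock. So while the curve stays among the $C_-$ lines with $\eta<\eta_0+h$, its direction deviates from $\alpha_1$ by $O(\sqrt h)$. But the $C_-$ line $\eta=\eta_0+h$ deviates from $\overline{\mathrm{P}_\varepsilon\mathrm{D}_\varepsilon}$ by an angle of the same order. Angle bounds alone therefore cannot decide whether the curve hits $S_\varepsilon$ before crossing that line. This competition is the degeneracy itself, and it must be resolved quantitatively.

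Two smaller points. The Riemann invariant that is constant in a simple wave with straight $C_-$ lines is $r_-$, not $r_+$. And the lower bound for $\bar{\partial}_s\beta$ that you call the main obstacle is the easy part; in fact it blows up like $\tan A_b$ near $\mathrm{P}_\varepsilon$.

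\textbf{The missing ingredient is the paper's ODE for the $C_+$ curve.} Parametrize $S_\varepsilon$ by the fan angle $\eta$ and write the curve as $(x_s,y_s)(\eta)+\ell_+(\eta)(\cos\beta_b,\sin\beta_b)(\eta)$. Imposing $y_+'=x_+'\tan\alpha_b$ gives (\ref{82401a}), namely $\sin(2A_b)\ell_+'=\ell_+\beta_b'\cos(2A_b)+|(x_s',y_s')|\sin(\phi-\alpha_b)$. The forcing term is negative because the shock is transversal to $C_+$.

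With $\beta_b'=-(1+\kappa)A_b'/(1+\kappa\sin^2A_b)$ this becomes (\ref{82403a}). Its coefficient behaves like $A_b'/(2\cos A_b)$ near the sonic state, so $\ell_+(\eta)\lesssim\ell_+(\eta_0)\big(\cos A_b(\eta_0)/\cos A_b(\eta)\big)^{1/2}$.

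The shock relation (\ref{82406a}), together with $\bar{r}_{+}'>0$ from (\ref{82501a}), gives $A_b'\approx-K\tan A_b$ with $K>0$ uniform in $\varepsilon$. Hence $\cos A_b(\eta)$ is bounded below for each fixed $\eta>\eta_0$, while $\cos A_b(\eta_0)=\cos A_1\to0$. So $\ell_+$ is $o(1)$ at every fixed $\eta>\eta_0$. The negative forcing then drives it to zero within a further $o(1)$ interval, which gives $\eta_*-\eta_0\to0$.

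Your fallback idea of ``an ODE in $\beta$'' points in this direction. Without this equation, its singular coefficient $\cot(2A_b)$, and the sign of the forcing term, the proposal does not establish the lemma.
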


\begin{proof}
For convenience, we represent the shock by the following one-parametric form
$$
x=x_{s}(\eta), \quad y=y_{s}(\eta),\quad  \eta_0<\eta<\eta_*,
$$
where $\eta_*=\arctan\big(\frac{y_{_{\mathrm{E}_{\varepsilon}}}-1}{x_{_{\mathrm{E}_{\varepsilon}}}}\big)$.
The states on its backside are characterized by
$$
u=u_b(\eta), \quad v=v_b(\eta), \quad c=c_b(\eta), \quad \beta=\beta_b(\eta),\quad \alpha=\alpha_b(\eta),\quad \mbox{and}  \quad A=A_b(\eta).
$$
Therefore, the straight characteristic curves of the simple wave can be represented by
$$
x=x_s(\eta)+\ell\cos\beta_b(\eta), \quad y= y_s(\eta)+\ell\sin\beta_b(\eta), \quad \ell>0.
$$

We denote the $C_{+}$ cross characteristic curve issued from the point $\mathrm{D}_{\varepsilon}$ by the following parametric form
with $\eta$ as the parameter:
$$
x=x_{+}(\eta):=x_s(\eta)+\ell_{+}(\eta)\cos\beta_b(\eta), \quad y=y_{+}(\eta):= y_s(\eta)+\ell_{+}(\eta)\sin\beta_b(\eta).
$$
Combining this with $y_{+}'(\eta)=x_{+}'(\eta)\tan\alpha_b(\eta)$, we have
\begin{equation}\label{82401a}
\sin(2A_b(\eta))r_{+}'(\eta)=\ell_{+}(\eta)\beta_b'(\eta)\cos(2A_b(\eta))+\cos(\alpha_b(\eta))y_s'(\eta)-
\sin(\alpha_b(\eta))x_s'(\eta).
\end{equation}

By a direct computation, we also have
$$
\cos(\alpha_b(\eta))y_s'(\eta)-
\sin(\alpha_b(\eta))x_s'(\eta)=\frac{\sin(\phi(\eta)-\alpha_b(\eta))}{\sqrt{(x_s'(\eta))^2+(y_s'(\eta))^2}}<0,
$$
since $0<\alpha_b(\eta)-\phi(\eta)<\pi$ along the shock. So, by (\ref{82401a}) we have
\begin{equation}\label{82402a}
\sin(2A_b(\eta))\ell_{+}'(\eta)<\ell_{+}(\eta)\beta_b'(\eta)\cos(2A_b(\eta)).
\end{equation}

From (\ref{7a}) and (\ref{72103}) we have
$$
\beta_b'(\eta)=-(1+\kappa)\tan(A_b(\eta))c_b^{-1}(\eta)c_b'(\eta).
$$
From Bernoulli's law (\ref{5802}) we have
\begin{equation}\label{82405a}
c_b'(\eta)=\frac{c_b(\eta)\cot(A_b(\eta))A_b'(\eta)}{1+\kappa \sin^2(A_b(\eta))}.
\end{equation}
Thus, by (\ref{82402a}) we have
\begin{equation}\label{82403a}
\frac{\ell_{+}'(\eta)}{\ell_{+}(\eta)}<-\frac{(1+\kappa)\cos(2A_b(\eta))A_b'(\eta)}{(1+\kappa \sin^2(A_b(\eta)))\sin(2A_b(\eta))}.
\end{equation}

From (\ref{72103}) and (\ref{52505}) we have
\begin{equation}\label{91801a}
\kappa \sin(\beta_b(\eta)-\chi(\eta))c_b'(\eta)=\frac{\mathcal{L}(\eta)\bar{r}_{+}'(\eta)}{\mathcal{G}(\eta)},
\end{equation}
where $\mathcal{G}(\eta)=\sqrt{(G_u+
G_v)^2(\bar{u}(\eta), \bar{v}(\eta), u_b(\eta), v_b(\eta))}$, $\chi(\eta)=\chi(\bar{u}(\eta), \bar{v}(\eta), u_b(\eta), v_b(\eta))$, and $\mathcal{L}(\eta)$ $= \mathcal{L}(\bar{u}(\eta), \bar{v}(\eta), u_b(\eta), v_b(\eta))$.
Inserting (\ref{82405a}) into (\ref{91801a}), we obtain
\begin{equation}\label{82406a}
\frac{\kappa \sin(\beta_b(\eta)-\chi(\eta))c_b(\eta)\cot(A_b(\eta))A_b'(\eta)}{1+\kappa \sin^2(A_b(\eta))}=\frac{\mathcal{L}(\eta)\bar{r}_{+}'(\eta)}{\mathcal{G}(\eta)}.
\end{equation}

From (\ref{102304a}) we have
\begin{equation}\label{82501a}
\bar{r}_{+}'(\eta)=-\frac{4 p'(\bar{\tau})\cos^2 \bar{A}}{\bar{\tau}p''(\bar{\tau})}.
\end{equation}

Combining (\ref{82403a})--(\ref{82501a}), we have
$$
\eta_*-\eta_0\rightarrow 0\quad \mbox{as}\quad \varepsilon\rightarrow 0,
$$
since $A_b(\eta_0)\rightarrow \frac{\pi}{2}$ as $\varepsilon\rightarrow 0$.
This completes the proof of the lemma.
\end{proof}

%We write (\ref{52505}) into the following form
%\begin{equation}\label{52505A}
%\sin(\phi-\beta)\sin(\beta-\chi)\frac{\bar{\partial}_{+}c}{\cos A}=-
%\sin(\alpha-\phi)\sin(\chi-\alpha)\frac{\bar{\partial}_{-}c}{\cos A}+\frac{(\gamma-1)\mathcal{L}\sin A\sin(\phi-\beta_f)}{\sin (2A_f)}\bar{\partial}_{+}r_{+}^{f},
%\end{equation}

%A direct computation yields
%\begin{equation}
%\bar{\partial}_{+}r_{+}^{f}=\frac{2\sin A_0\bar{r}_{+}'(-A_0)}{x_{_P}(1+\tan^2 A_0)}.
%\end{equation}
\begin{lem}\label{lem32}
Assume that $\varepsilon$ is sufficiently small. Then there is a small $\delta>0$, independent of $\varepsilon$, such that if the free boundary problem (\ref{42501}), (\ref{br1})--(\ref{br4}) admits a continuous and piecewise-smooth solution on a domain  $\Sigma_{\varepsilon}(\delta')$ bounded by $\overline{\mathrm{P_{\varepsilon}D_{\varepsilon}}}$, $y=\psi_{\varepsilon}(x)$ ($x_{_\mathrm{P_{\varepsilon}}}<x<x_{_\mathrm{P_{\varepsilon}}}+\delta'$), the solid wall $y=x\tan\theta_w$, and a forward $C_{-}$ characteristic line issued from $(x_{_\mathrm{P_{\varepsilon}}}+\delta', \psi_{\varepsilon}(x_{_\mathrm{P_{\varepsilon}}}+\delta'))$ for some $\delta'<\delta$, then the solution satisfies
\begin{equation}\label{estimate}
2d_{*}<\frac{\bar{\partial}_{+}c}{\cos A}<0
 \quad \mbox{and}\quad -\frac{4(\gamma-1)c_*}{\ell_{*}(\gamma+1)}<\frac{\bar{\partial}_{-}c}{\cos^2 A}\leq 0
 \quad \mbox{on}\quad\Sigma_{\varepsilon}(\delta').
\end{equation}
\end{lem}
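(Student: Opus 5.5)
We prove Lemma \ref{lem32} by a continuity (bootstrap) argument along characteristics, using the weighted characteristic decompositions (\ref{wcd}) in the interior and the shock relation (\ref{52505}) on ${\it S}_{\varepsilon}$. Set $W_{+}=\bar{\partial}_{+}c/\cos A$ and $Z_{\pm}=\bar{\partial}_{\pm}c/\cos^2A$. The estimates hold on the initial simple-wave region $\Delta_{\varepsilon}$: there $\bar{\partial}_{-}c=0$ because the $C_-$ lines are straight and carry constant states. Also $\bar{\partial}_{+}c<0$, and its size near $\overline{\mathrm{P}_{\varepsilon}\mathrm{E}_{\varepsilon}}$ follows from (\ref{91801a}) and (\ref{82501a}). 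So we assume (\ref{estimate}) holds with nonstrict inequalities on a subdomain $\Sigma_{\varepsilon}(\delta'')$, $\delta''\le\delta'$, and show that the inequalities are in fact strict there. This lets the domain be extended, and the estimate then covers all of $\Sigma_{\varepsilon}(\delta')$.

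\emph{Sign and bound of $Z_-$.} Integrate the second equation of (\ref{wcd}) along $C_+$ characteristics. Each such characteristic either starts on $\overline{\mathrm{P}_{\varepsilon}\mathrm{D}_{\varepsilon}}$, where $Z_-=0$ (constant state), or starts on the wall $y=x\tan\theta_w$. On the wall, the boundary condition gives $\bar{\partial}_{w}\sigma=0$. Combined with (\ref{7a})--(\ref{8a}), this says the reflection $\alpha+\beta=2\theta_w$ relates $\bar{\partial}_{-}c$ and $\bar{\partial}_{+}c$ with a positive factor, so $Z_-$ has the sign of $Z_+$ times a bounded factor. Because the right-hand side of (\ref{wcd}) is $Z_-\,(\mathcal{F}Z_+ + \tfrac{\gamma+1}{2(\gamma-1)}Z_-)$, the Riccati structure keeps $Z_-\le0$, since $Z_-=0$ is invariant. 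For the lower bound, compare with the Riccati ODE $c\,\frac{{\rm d}Z}{{\rm d}s}=\frac{\gamma+1}{2(\gamma-1)}Z^2+O(\delta)|Z|$ along a $C_+$ curve of length at most about $\ell_*$. The term $\mathcal{F}Z_+Z_-$ has a favourable sign ($Z_+<0$, $Z_-\le0$ gives a positive product), and $\bar{\partial}_{+}c=O(\cos A)$ makes it small. An explicit blow-up time comparison then gives $Z_->-\frac{4(\gamma-1)c_*}{\ell_*(\gamma+1)}$, provided $\delta$ is small, independent of $\varepsilon$.

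\emph{Bound of $W_+$.} Write the first equation of (\ref{wcd}) in terms of $W_+$. Using (\ref{pma}), $\bar{\partial}_{-}\cos A$ is a multiple of $\bar{\partial}_{-}c$, which gives
$c\bar{\partial}_{-}W_+=\cos A\,\big(\mathcal{F}W_+Z_-+\tfrac{\gamma+1}{2(\gamma-1)}W_+^2\big)+(\text{term}\propto W_+Z_-\cos A)$.
The right-hand side therefore carries a factor $\cos A$ times bounded quantities. So $W_+$ changes by at most $O(\delta)$ along $C_-$ curves issued from ${\it S}_{\varepsilon}$, which have length $O(\ell_*)$, and it keeps its sign. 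It remains to bound $W_+$ on ${\it S}_{\varepsilon}$, which we do with (\ref{52505}). Dividing by $\cos A$ gives
$\kappa t_+\sin(\beta-\chi)W_+=-\kappa t_-\sin(\chi-\alpha)\cos A\,Z_-+\mathcal{G}^{-1}\mathcal{L}\bar{\partial}_{s}r_+^f/\cos A$.
Here $t_+\sin(\beta-\chi)<0$ by the Proposition on the signs of $\alpha_*,\beta_*,\chi_*,\phi_*$ and (\ref{91201a}). The $Z_-$ term is $O(\cos A)$, hence small.

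\emph{Main obstacle.} The hard part is the last term, $\mathcal{L}\bar{\partial}_{s}r_+^f/\cos A$. Since $\mathcal{L}\propto(\rho-\rho_f)/\sin 2A_f$ is finite and $\bar{\partial}_{s}r_+^f>0$ is bounded by (\ref{82501a}), this term behaves like $1/\cos A$. It would blow up at the sonic state unless $\cos A$ along the shock is itself comparable to the accumulated turning, that is, $\cos A\sim\int\bar{r}_+'$. We will resolve this by integrating (\ref{82406a}) along the shock. Since $\cot A\approx\cos A$, it reads $\frac{{\rm d}}{{\rm d}\eta}\big(\tfrac12\cos^2A\big)\approx$ const$\cdot(\eta-\eta_0)$-like growth. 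More precisely, $\cos A\,\frac{{\rm d}\cos A}{{\rm d}s}$ is close to a positive constant, so the ratio $\frac{{\rm d}}{{\rm d}s}(\cdots)/\cos A$ stays bounded, and the limiting values give exactly the constant $d_*$, with the factor $2$ absorbing the $\epsilon$-errors from (\ref{91201a}) and the $O(\delta)$ interior variations. Plugging the values at the sonic shock ($\mathcal{L}_*,\mathcal{G}_*,\phi_*,\beta_*,\chi_*$ and $\bar{r}_+'(\eta_0)$ from (\ref{82501a})) gives $W_+\to d_*$ at $\mathrm{P}$. Hence $2d_*<W_+<0$ for $\delta$ and $\varepsilon$ small, which closes the bootstrap.
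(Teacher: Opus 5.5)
\paragraph{Main gap: the $\varepsilon$-independent lower bound for $Z_-$.}
Your overall scheme matches the paper's: a bootstrap, $W_+$ on ${\it S}_{\varepsilon}$ from (\ref{52505}), transport along $C_-$, reflection at the wall, and transport of $Z_-$ along $C_+$. But the step meant to give $Z_->-\frac{4(\gamma-1)c_*}{(\gamma+1)\ell_*}$ does not work, and the idea that actually produces this bound is missing.

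\begin{itemize}
\item On the wall the reflection condition gives exactly $\bar{\partial}_{+}c=\bar{\partial}_{-}c$, since the relevant factor is $\tan A+\Gamma\sin A\cos A=\frac{4}{\gamma+1}\sin A\cos A\neq0$. So $Z_-=Z_+$ there.
\item The only information you have on $Z_+$ is $Z_+=W_+/\cos A>2d_*/\cos A$. This is useless as $\varepsilon\to0$, because $\cos A\approx\cos A_1\to0$ near $\mathrm{D}_{\varepsilon}$. You therefore get only the sign of $Z_-$ on the wall.
\item Integrating $\bar{\partial}_{+}(1/Z_-)\le-\frac{\gamma+1}{2c(\gamma-1)}$ along $C_+$ from the wall gives $Z_->-\frac{2(\gamma-1)c}{(\gamma+1)s}$ at arclength $s$ from the wall. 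This is independent of the wall data but degenerates as $s\to0$. So no blow-up-time comparison along $C_+$ alone bounds $Z_-$ near $\mathcal{W}$. Such a comparison also needs a lower bound on the curve length, not the upper bound ``at most about $\ell_*$'' you invoke.
\item The term $\mathcal{F}Z_+Z_-$ is not small. $Z_+=W_+/\cos A$ is of order $1/\cos A$; you conflated $\bar{\partial}_{+}c=O(\cos A)$ with $Z_+$. The term may be dropped only because of its sign.
\end{itemize}

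The missing ingredient is the reciprocal Riccati estimate along $C_-$, namely (\ref{81302}).
\begin{itemize}
\item From the first equation of (\ref{wcd}), with $Z_-\le0$ and $Z_+<0$, one has $\bar{\partial}_{-}(\cos^2A/\bar{\partial}_{+}c)<-\frac{\gamma+1}{2c(\gamma-1)}$.
\item On ${\it S}_{\varepsilon}$, $\cos^2A/\bar{\partial}_{+}c=\cos A/W_+$ is close to $0$, precisely because $Z_+$ is huge there.
\item Integrating over a $C_-$ segment of length about $\ell_*$ gives $Z_+>-\frac{4(\gamma-1)c_*}{(\gamma+1)\ell_*}$ on the wall, uniformly in $\varepsilon$. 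This is where the constant in the lemma comes from.
\end{itemize}
Only after this do the reflection $Z_-=Z_+$ and the monotone increase of $Z_-$ along $C_+$ close the argument.

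\paragraph{Further points.}
\begin{itemize}
\item Your transport equation for $W_+$ is wrong. The correct one is $c\bar{\partial}_{-}W_+=\frac{\gamma+1}{2(\gamma-1)}\frac{W_+^2}{\cos A}+O(1)\,W_+Z_-$. So $W_+$ does not stay within $O(\delta)$ of its shock value; it rises from about $d_*$ to $O(\cos A)$ at the wall. The bound $2d_*<W_+<0$ still holds, but only because the right-hand side is $\ge0$ when $\bar{\partial}_{\pm}c\le0$ and $W_+=0$ is invariant, which is the paper's argument. The large term $W_+^2/\cos A$ you dropped is exactly what drives the missing wall estimate.
\item Your ``main obstacle'' is illusory. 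Since $t_+=\sin(\phi-\beta)/\sin 2A$ already carries a factor $1/\cos A$, multiply (\ref{52505}) by $\sin 2A$ and divide by $\cos A$. This expresses $W_+$ on ${\it S}_{\varepsilon}$ directly as an $O(\cos A)Z_-$ term plus a bounded term tending to $d_*$. No integration of (\ref{82406a}) is needed.
\item The relation (\ref{82406a}) in fact makes $\cos^2A$, not $\cos A$, grow linearly along the shock from $\cos^2A_1$. So it could not cure a genuine $1/\cos A$ singularity at $\mathrm{P}_{\varepsilon}$ anyway.
\item What you do need, and did not justify, is that $A$ stays within $\varrho$ of $\pi/2$ along ${\it S}_{\varepsilon}$, so that (\ref{91201a}) applies. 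The paper gets this from (\ref{81301}): under the bootstrap bounds $\bar{\partial}_{s}\sin A$ is bounded, and the shock arc has length $O(\delta)$.
\end{itemize}
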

\begin{proof}
From the theory of characteristics, we know that the solution has weak discontinuities along the characteristic curve emanating from the point $\mathrm{D}_{\varepsilon}$, as well as along the subsequent characteristic curves reflected from $S_{\varepsilon}$ and from the lower wall; i.e.,  $\bar{\partial}_{+}c$ or $\bar{\partial}_{-}c$ has a jump discontinuity  on these curves.

The proof of this lemma proceeds in three steps.

\noindent
{\it Step 1.} We first show that for any fixed $\varrho>0$,
if (\ref{estimate}) holds and $\varepsilon$ and $\delta$ are sufficiently small,  then
\begin{equation}\label{71906}
\frac{\pi}{2}-\varrho<A<\frac{\pi}{2}\quad \mbox{on} \quad  {\it S_{\varepsilon}}.
\end{equation}

From (\ref{72103}), we have
\begin{equation}
\bar{\partial}_{s}c=\frac{\sin(\phi-\beta)}{2\sin A}\cdot\frac{\bar{\partial}_{+}c}{\cos A}+\frac{\sin(\alpha-\phi)}{2\sin A}\cdot\frac{\bar{\partial}_{-}c}{\cos A}\quad \mbox{along}\quad {\it S_{\varepsilon}}.
\end{equation}
Combining this with Bernoulli's law (\ref{5802}), we get
\begin{equation}\label{81301}
\cos A\bar{\partial}_{s}A=\frac{1+\kappa \sin^2 A}{q}\left(\frac{\sin(\phi-\beta)}{2\sin A}\cdot\frac{\bar{\partial}_{+}c}{\cos A}+\frac{\sin(\alpha-\phi)}{2\sin A}\cdot\frac{\bar{\partial}_{-}c}{\cos A}\right)\quad \mbox{along}\quad {\it S_{\varepsilon}}.
\end{equation}
Then by (\ref{91201a}) we get (\ref{71906}).

\vskip 4pt
\noindent
{\it Step 2}.
From (\ref{52505}),
we have
$$
\begin{aligned}
\frac{\bar{\partial}_{+}c}{\cos A}&=
\frac{\sin(\phi-\alpha)\sin(\chi-\alpha)\cos A}{\sin(\phi-\beta)\sin(\beta-\chi)}\cdot\frac{\bar{\partial}_{-}c}{\cos^2 A}+\frac{(\gamma-1)\mathcal{L}\sin A\sin(\phi-\beta_f)\bar{\partial}_{+}r_{+}^{f}}{\mathcal{G}\sin(\phi-\beta)\sin(\beta-\chi)\sin (2A_f)}
 \\&=\frac{\sin(\phi-\alpha)\sin(\chi-\alpha)\cos A}{\sin(\phi-\beta)\sin(\beta-\chi)}\cdot\frac{\bar{\partial}_{-}c}{\cos^2 A}+\frac{(\gamma-1)\mathcal{L}\sin A\sin(\phi-\beta_f)\bar{r}_{+}'(\eta)}{\mathcal{G}\sin(\phi-\beta)\sin(\beta-\chi)\sqrt{x^2+(y-1)^2}}
 \\&=\frac{\sin(\phi-\alpha)\sin(\chi-\alpha)\cos A}{\sin(\phi-\beta)\sin(\beta-\chi)}\cdot\frac{\bar{\partial}_{-}c}{\cos^2 A} -\frac{4(\gamma-1)\mathcal{L}\sin A\sin(\phi-\beta_f)\cos A_fp'(\tau_f)}{\mathcal{G}\sin(\phi-\beta)\sin(\beta-\chi)\tau_fp''(\tau_f)\sqrt{x^2+(y-1)^2}}
\end{aligned}
$$
along $S_{\varepsilon}$.
Combining this with (\ref{81301}) we know that if $-\frac{4(\gamma-1)c_*}{\ell_{*}(\gamma+1)}\leq \frac{\bar{\partial}_{-}c}{\cos^2 A}\leq 0$ along ${\it S_{\varepsilon}}$, then when $\varepsilon$ and $\delta$ are sufficiently small,  we have
\begin{equation}\label{71910}
2d_{*}<\frac{\bar{\partial}_{+}c}{\cos A}<\frac{d_{*}}{2}  \quad \mbox{along}\quad {\it S_{\varepsilon}}.
\end{equation}

By a direct computation, we have
$$
\bar{\partial}_{-}\left(\frac{\bar{\partial}_{+}c}{\cos A}\right)= \frac{(\gamma+1)\bar{\partial}_{+}c\bar{\partial}_{+}c}{2c(\gamma-1)\cos^3A}+\frac{(\gamma+1)-2\sin^2 2A}{2c(\gamma-1)\cos^3 A}
   \bar{\partial}_{-}c\bar{\partial}_{+}c+\frac{\sin A\bar{\partial}_{+}c\bar{\partial}_{-}A}{\cos^2 A}\geq 0
$$
provided that $\bar{\partial}_{\pm}c\leq 0$.
Hence, by (\ref{71910}) we have
\begin{equation}
2d_{*}<\frac{\bar{\partial}_{+}c}{\cos A}<0\quad  \mbox{in}\quad \Sigma_{\varepsilon}(\delta'),
\end{equation}
provided that $\bar{\partial}_{-}c\leq 0$ in $\Sigma_{\varepsilon}(\delta')$.

From the second equation of (\ref{wcd}) we also have
\begin{equation}\label{81302}
\bar{\partial}_{-}\Big(\frac{\cos^2 A}{\bar{\partial}_{+}c}\Big)~=~-\frac{\gamma+1}{2c(\gamma-1)} -
\left( \frac{\mathcal{F}\bar{\partial}_{-}c}{c\cos^2 A}\right)\frac{\cos^2 A}{\bar{\partial}_{+}c}<-\frac{\gamma+1}{2c(\gamma-1)}\quad  \mbox{in}\quad \Sigma_{\varepsilon}(\delta'),
\end{equation}
provided that $\bar{\partial}_{-}c\leq 0$ in $\Sigma_{\varepsilon}(\delta')$.

For convenience, we use $\mathcal{W}$ to denote the lower wall.
Integrating (\ref{81302}) along the characteristic curves issued from ${\it S_{\varepsilon}}$, we get that when $\varepsilon$ and $\delta$ are sufficiently small,
\begin{equation}\label{71907}
\frac{\bar{\partial}_{+}c}{\cos^2 A}>-\frac{4(\gamma-1)c_*}{\ell_{*}(\gamma+1)}\quad\mbox{on}\quad \mathcal{W}\cap\overline{\Sigma_{\varepsilon}(\delta')},
\end{equation}
provided that $\bar{\partial}_{-}c\leq 0$ in $\Sigma_{\varepsilon}(\delta')$.

\vskip 2pt
\noindent
{\it Step 3.}
From (\ref{br4}) we have
$$
\bar{\partial}_0\sigma=0\quad \mbox{along}\quad \mathcal{W}\cap\overline{\Sigma_{\varepsilon}(\delta')}.
$$
Since $\bar{\partial}_0\sigma=\frac{1}{4}(\bar{\partial}_{+}+\bar{\partial}_{-})(\alpha+\beta)$, by (\ref{7a})--(\ref{8a}) we have
\begin{equation}\label{71905}
\bar{\partial}_{+}c=\bar{\partial}_{-}c\quad  \mbox{along}\quad \mathcal{W}\cap\overline{\Sigma_{\varepsilon}(\delta')}.
\end{equation}
Combining this with (\ref{71907}) we have
\begin{equation}\label{71902}
-\frac{4(\gamma-1)c_*}{(\gamma+1)\ell_{*}}<\frac{\bar{\partial}_{-}c}{\cos^2 A}<0\quad\mbox{on}\quad \mathcal{W}\cap\overline{\Sigma_{\varepsilon}(\delta')}.
\end{equation}
Furthermore, by the second equation of (\ref{wcd}) we have
$$
-\frac{4(\gamma-1)c_*}{(\gamma+1)\ell_{*}}<\frac{\bar{\partial}_{-}c}{\cos^2 A}<0\quad\mbox{on}\quad \Sigma_{\varepsilon}(\delta')\setminus \Delta_{\varepsilon}.
$$

Therefore, by the continuity argument we get (\ref{estimate}).
This completes the proof.
\end{proof}

%From Lemma \ref{lem32}
%we have the following conclusion.
\begin{lem}
Assume that $\delta$ is sufficiently small. Then, for any  small $\varepsilon>0$, the regularized problem (\ref{42501}), (\ref{br1})--(\ref{br4}) admits a classical solution in $\Sigma_{\varepsilon}(\delta)$. Moreover, there exists a positive constant $\mathcal{M}$ independent of $\varepsilon$ such that the solution satisfies
\begin{equation}\label{2401}
|Du|+|Dv|<\mathcal{M} \quad \mbox{on}\quad \Sigma_{\varepsilon}(\delta).
\end{equation}
\end{lem}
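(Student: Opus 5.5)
The plan is a continuation argument in which the a priori bounds of Lemma \ref{lem32} are the only ingredient that has to be uniform in $\varepsilon$.

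\emph{Local solution.} For fixed $\varepsilon>0$ the downstream state $(u_1,v_1)$ is strictly supersonic. Hence (\ref{42501}) is strictly hyperbolic in a neighbourhood of the data, and the free boundary problem (\ref{42501}), (\ref{br1})--(\ref{br4}) can be solved locally by Li--Yu \cite{Li-Yu}. Near $\mathrm{P}_\varepsilon$ this solution is the rarefaction simple wave on $\Delta_\varepsilon$ constructed above. Beyond $\Delta_\varepsilon$ we continue by alternating steps. First solve a Goursat problem between the $C_+$ characteristic $\wideparen{\mathrm{D}_{\varepsilon}\mathrm{E}_{\varepsilon}}$ and the wall, using the slip condition (\ref{br1}). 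Then solve a free boundary problem at $S_\varepsilon$ for the reflected characteristics. The classical theory of characteristics for strictly hyperbolic $2\times2$ systems in Riemann invariants (\ref{52801}) gives existence as long as two things hold: $A$ stays bounded away from $\pi/2$, i.e.\ $q>c$ uniformly, and the first derivatives stay bounded. The weak discontinuities across the reflected characteristics only make the solution piecewise smooth. This matches the hypothesis of Lemma \ref{lem32}.

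\emph{Continuation.} Let $\delta$ be the constant from Lemma \ref{lem32}, and let $\delta'_{\max}\le\delta$ be the supremum of those $\delta'$ for which a solution exists on $\Sigma_\varepsilon(\delta')$. On every such domain Lemma \ref{lem32} yields (\ref{estimate}). In particular $\bar\partial_\pm c\le0$, so $c$ does not increase along characteristics emanating from $\overline{\mathrm{P}_\varepsilon\mathrm{D}_\varepsilon}$. Consequently $c\le c_1<c_*$, and $A$ stays bounded away from $\pi/2$ for fixed $\varepsilon$. Together with (\ref{estimate}) this bounds $|\nabla c|$, so the solution extends across $x_{\mathrm{P}_\varepsilon}+\delta'_{\max}$ unless $\delta'_{\max}=\delta$. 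For the shock we use (\ref{br4}) and Step 1 of Lemma \ref{lem32}: the shock stays strictly noncharacteristic, with $\phi$ near $\phi_*$ and strictly between $\beta$ and $\alpha$. Hence $\psi_\varepsilon$ remains Lipschitz and the free boundary step can be repeated. Lemma \ref{lem3} ensures the shock keeps meeting the fan, with $\eta$ near $\eta_0$, so $(u_f,v_f)$ stays smooth. Therefore the solution exists on all of $\Sigma_\varepsilon(\delta)$.

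\emph{Uniform gradient bound.} From (\ref{estimate}) we get $|\bar\partial_+c|\le 2|d_*|\cos A\le 2|d_*|$ and $|\bar\partial_-c|\le \frac{4(\gamma-1)c_*}{\ell_*(\gamma+1)}$, and neither bound depends on $\varepsilon$. By (\ref{11a}) and (\ref{72804a}), $|\bar\partial_\pm u|,|\bar\partial_\pm v|\le\kappa|\bar\partial_\pm c|$. To pass to $|Du|+|Dv|$ we must invert the frame $(\bar\partial_+,\bar\partial_-)$, whose determinant is $\sin2A$; this degenerates as $A\to\pi/2$, which is exactly the sonic limit. This is the main obstacle.

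I would handle it using the weights in (\ref{estimate}). Write
\[
\partial_x=\frac{\sin\alpha\,\bar\partial_- -\sin\beta\,\bar\partial_+}{\sin2A},
\]
and similarly for $\partial_y$. With (\ref{11a}) this gives
\[
u_x=-\frac{\kappa\sin\alpha\sin\beta}{\sin 2A}\,(\bar\partial_-c+\bar\partial_+c).
\]
Since $\bar\partial_+c=O(\cos A)$ and $\bar\partial_-c=O(\cos^2A)$, the quotient $(\bar\partial_-c+\bar\partial_+c)/\sin2A$ is $O(1)$ uniformly, and the same holds for $u_y,v_x,v_y$. This yields (\ref{2401}) with $\mathcal M$ depending only on $d_*,\ell_*,\gamma,c_*$. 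The weights $\cos A$ and $\cos^2A$ in Lemma \ref{lem32} seem designed precisely to cancel this $1/\sin2A$, so I expect this step to close.
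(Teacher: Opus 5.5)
Your proposal is correct and follows the paper's proof. Existence comes from the $\varepsilon$-uniform a priori bounds of Lemma \ref{lem32} together with continuation by the method of characteristics, and the uniform gradient bound comes from inverting the frame $(\bar{\partial}_{+},\bar{\partial}_{-})$ via (\ref{11a}), (\ref{72804a}), where the weights $\cos A$ and $\cos^2 A$ in (\ref{estimate}) absorb the factor $1/\sin 2A$.

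One small algebra slip: the correct formula is $u_x=-\kappa\big(\sin^2\beta\,\bar{\partial}_{+}c+\sin^2\alpha\,\bar{\partial}_{-}c\big)/\sin 2A$, not one with a common factor $\sin\alpha\sin\beta$. This does not affect the bound.
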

\begin{proof}
The existence follows from Lemma \ref{lem32} and the method of characteristics. The estimate (\ref{2401}) can be obtained by using (\ref{11a}), (\ref{72804a}),
$$
\partial_{x}=-\frac{\sin\beta\bar{\partial}_{+}-\sin\alpha\bar{\partial}_{-}}{\sin2A} \quad
\mbox{and}\quad
\partial_{y}=\frac{\cos\beta\bar{\partial}_{+}-\cos\alpha\bar{\partial}_{-}}{\sin2A}.
$$
Since the computations are direct, we omit the details.
\end{proof}

From the second equation of (\ref{wcd}) and the second estimate in (\ref{estimate}), we know that there is a constant $\mathcal{C}<0$ independent of $\varepsilon$ and $\delta$ such that
\begin{equation}\label{92401a}
\bar{\partial}_{-}\beta>0, \quad \bar{\partial}_{+}\alpha<0,\quad \mbox{and}\quad\frac{\bar{\partial}_{+}c}{\cos ^2 A}<\mathcal{C}\quad \mbox{on} \quad\Sigma_{\varepsilon}(\delta).
\end{equation}
Combining this with (\ref{pma}) we obtain
\begin{equation}\label{92401b}
\bar{\partial}_{0}A<\frac{\bar{\partial}_{+}A+\bar{\partial}_{-}A}{2\cos A}
=\frac{(1+\kappa\sin^2 A)(\bar{\partial}_{+}c+\bar{\partial}_{-}c)}{2c\cot A\cos A}
<\frac{\sin A(1+\kappa\sin^2 A) \mathcal{C}}{2c}\quad \mbox{on}\quad\Sigma_{\varepsilon}(\delta).
\end{equation}
From (\ref{7a}), (\ref{8a}),  (\ref{72103}), (\ref{estimate}), and (\ref{71910}),  we have
\begin{equation}\label{92401c}
\bar{\partial}_{s}\beta>0\quad \mbox{on}\quad{S_{\varepsilon}}.
\end{equation}
From (\ref{92401a})--(\ref{92401c}) we can see that for a fixed small $\delta>0$, the areas of $\Sigma_{\varepsilon}(\delta)$ are uniformly bounded below away from zero with respect to $\varepsilon$.

Therefore, by the Ascoli-Arzel\`{a} theorem and a standard diagonal procedure we obtain a Lipschitz-continuous sonic-supersonic solution to (\ref{42501}), (\ref{b1})--(\ref{b4}) on a quadrilateral domain $\Sigma(\delta)$ bounded by $\overline{\mathrm{PD}}$, $y=\psi(x)$ ($x_{_\mathrm{P}}<x<x_{_\mathrm{P}}+\delta$), the lower wall $y=x\tan\theta_*$, and a forward $C_{-}$ characteristic line issued from the point $(x_{_\mathrm{P}}+\delta, \psi(x_{_\mathrm{P}}+\delta))$.
 This completes the proof of Theorem \ref{main}.

\vskip 32pt

\section*{Acknowledgement}
This work was partially supported by National Natural Science Foundation of China (No. 12671262) and Natural Science Foundation of Shanghai (No. 23ZR1422100).

\vskip 48pt
%%%%%%%%%%%%%%%%%%%%%%%%%%%%%%%%%%%%%%%%%%%%%%%%%%%%%%%%%%%%%%%%%%%%%%%%%%%%%%%%%%%%%%%%

\end{document}